\newtheorem{theorem}{Theorem}
\newtheorem{lemma}[theorem]{Lemma}
\newtheorem{conjecture}{Conjecture}
\newtheorem{corollary}[theorem]{Corollary}
\newtheorem{question}{Question}
\newenvironment{proof}
      {\medskip\noindent{\bf Proof:}\hspace{1mm}}
      {\hfill$\Box$\medskip}
\def\qed{\ifvmode\mbox{ }\else\unskip\fi\hskip 1em plus 10fill$\Box$}
\def\Ddots{\mathinner{\mkern1mu\raise\p@
\vbox{\kern7\p@\hbox{.}}\mkern2mu
\raise4\p@\hbox{.}\mkern2mu\raise7\p@\hbox{.}\mkern1mu}}
\title{\vspace{-0.7cm}Stanley-Wilf limits are typically exponential}
\author{
Jacob Fox\thanks{
    Department of Mathematics,
    Massachusetts Institute of Technology,
    Cambridge, MA 02139-4307.
    Email: {\tt fox@math.mit.edu}.
    Research supported by a Packard Fellowship, by a Simons Fellowship, by NSF grant DMS-1069197, by an Alfred P. Sloan Fellowship, and by an MIT NEC Corporation Award.}}
\date{}
\begin{document}
\maketitle

\begin{abstract} 
For a permutation $\pi$, let $S_{n}(\pi)$ be the number of permutations on $n$ letters avoiding $\pi$. Marcus and Tardos proved the celebrated Stanley-Wilf conjecture that $L(\pi)= \lim_{n \to \infty} S_n(\pi)^{1/n}$ exists and is finite. Backed by numerical evidence, it has been conjectured by many researchers over the years that $L(\pi)=\Theta(k^2)$ for every permutation $\pi$ on $k$ letters. We disprove this conjecture, showing that $L(\pi)=2^{k^{\Theta(1)}}$ for almost all permutations $\pi$ on $k$ letters.
\end{abstract}

\section{Introduction}

Pattern avoidance is a central topic in combinatorics. Permutation avoidance has been a particularly popular area of study. This can be seen from the books \cite{B12,Ki1} and surveys \cite{S06,Ste13}, the annual conference {\it Permutation Patterns} since 2003, and the many applications collected in Tenner's database \cite{Te}. A permutation of $[n]:=\{1,\ldots,n\}$ is called an {\it $n$-permutation}. An $n$-permutation $\sigma$ {\it contains} a $k$-permutation $\pi$ if there exists integers $1 \leq x_1 < x_2 < \ldots < x_k \leq n$ such that for $1 \leq i,j \leq k$ we have $\sigma(x_i) < \sigma(x_j)$ if and only if $\pi(i) < \pi(j)$. Otherwise, $\sigma$ {\it avoids} $\pi$. 

For a permutation $\pi$, let $S_{n}(\pi)$ be the number of $n$-permutations avoiding $\pi$. Classical results of McMahon \cite{M15} and Knuth \cite{K68} imply that for every $3$-permutation $\pi$ and every positive integer $n$, we have  $S_n(\pi)$ is the $n$th Catalan number $\frac{1}{n+1}{2n \choose n}$. A consequence of the RSK algorithm is that, for $\pi=12\cdots k$ the identity $k$-permutation, $$\lim_{n \to \infty} S_n(\pi)^{1/n} =(k-1)^2,$$ and Regev \cite{R81} proved a stronger asymptotic formula (see also \cite{N11}). 

Stanley and Wilf independently (see \cite{Notices} for a complete history) asked in 1980 about the behavior of $S_n(\pi)$ for a general $k$-permutation $\pi$ and large $n$. Wilf was originally unaware of Regev's work and asked if $S_n(\pi) \leq (k+1)^n$, while Stanley asked if $\lim_{n \to \infty} S_n(\pi)^{1/n}=(k-1)^2$.  
Both of these original questions have negative answers. They quickly modified these questions to the following conjecture: For every $k$-permutation $\pi$ there is a finite number $L(\pi)$ such that $\lim_{n \to \infty}  S_n(\pi)^{1/n}=L(\pi)$. 

A seemingly weaker conjecture considered by B\'ona and others asks if, for every permutation $\pi$, there exists $C=C(\pi)$ such that $S_{n}(\pi) \leq C^n$ for all $n$.  As observed by Arratia \cite{A99}, these two conjectures are equivalent. This equivalence follows from the simple observation that $S_n(\pi)$ is super-multiplicative. Indeed, by symmetry, we may assume the first letter in $\pi$ is larger than the last letter in $\pi$. The super-multiplicativity then follows from the fact that the concatenation of two permutations which avoid $\pi$ where every letter in the first permutation is smaller than every letter in the second permutation also avoids $\pi$. These equivalent conjectures became known as the {\it Stanley-Wilf conjecture}, a name introduced by B\'ona \cite{Notices}.

Alon and Friedgut \cite{AF} conjectured that the longest word avoiding a fixed $k$-permutation $\pi$ and satisfying that equal letters are distance at least $k$ in the word has length linear in the alphabet size. They showed their conjecture implies the Stanley-Wilf conjecture. They use this relationship to get a slightly super-exponential bound on $S_n(\pi)$, of the form $C(\pi)^{n\gamma(n)}$, where $\gamma(n)$ is an extremely slow growing function related to the inverse Ackerman hierarchy. Klazar \cite{K00} proved that the Stanley-Wilf conjecture is implied by the F\"uredi-Hajnal conjecture \cite{FH}, an extremal problem for matrices described below. He also showed that the Alon-Friedgut conjecture is equivalent to the F\"uredi-Hajnal conjecture. 

Marcus and Tardos \cite{MT} proved by an elegant argument the F\"uredi-Hajnal conjecture and hence the Stanley-Wilf conjecture and the Alon-Friedgut conjecture. This important work has led to a great deal of further developments. The number $L(\pi)=\lim_{n \to \infty}S_n(\pi)^{1/n}$ is known as the {\it Stanley-Wilf limit} of the permutation $\pi$. For a $k$-permutation $\pi$, the Marcus-Tardos proof of the Stanley-Wilf conjecture shows that $L(\pi) \leq 15^{2k^4{k^2 \choose k}}$.  

 In 1999, Arratia \cite{A99} conjectured that the quadratic bound $L(\pi) \leq (k-1)^2$ holds for every $k$-permutation $\pi$. In the tradition of Erd\H{o}s, Arratia further offered $\$ 100$ for settling this conjecture.  Seven years later, Albert et al.~\cite{AERWZ} (see also \cite{B07}) disproved the conjectured bound by a bit. They showed $L(4231)>9.47$, whereas the conjectured upper bound was $9$.

Since the work of Marcus and Tardos, the problem of closing the large gap between the quadratic and double-exponential bounds has attracted a great deal of further attention. Here we mention a few of these developments.

It has been conjectured by many researchers that $L(\pi)=\Theta(k^2)$ for every $k$-permutation $\pi$. A permutation is {\it layered} if it is a concatenation of decreasing sequences, the letters of each sequence being smaller than the letters in the following sequences. Backed by numerical evidence, first computed by West \cite{W90} and later replicated by many others, Bona \cite{B05} (see also \cite{B12,B13+,C09,CJS,GM14,Ste13}) conjectured that among the patterns of a given length, the largest Stanley-Wilf limit is attained by a layered permutation. The recent survey \cite{Ste13} states that this conjecture is widely believed to be true. Claesson, Jel\'inek, and Steingr\'imsson \cite{CJS} proved (see also \cite{B13+}) that if $\pi$ is a layered $k$-permutation, then $L(\pi) \leq 4k^2$. Valtr (see \cite{KK02}) showed that there is an absolute positive constant $c$ such that $L(\pi) \geq ck^2$ holds for every permutation $\pi$ on $k > 2$ letters. Thus the conjecture that $L(\pi)=\Theta(k^2)$ for every $k$-permutation $\pi$ would follow from B\'ona's conjecture. 

We disprove these conjectures on Stanley-Wilf limits, showing that for each $k$, there is a $k$-permutation $\pi$ such that $L(\pi)$ has exponential-type growth in $k$.

\begin{theorem}\label{main1}
For each $k$, there is a $k$-permutation with $L(\pi)=2^{\Omega(k^{1/4})}$.
\end{theorem}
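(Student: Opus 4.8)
The plan is to deduce Theorem~\ref{main1} from the existence of a dense but highly structured zero--one matrix avoiding an appropriate permutation pattern. Write $P_\pi$ for the permutation matrix of $\pi$ and $\mathrm{ex}(n,P)$ for the largest number of $1$-entries in an $n\times n$ zero--one matrix avoiding $P$ as a submatrix pattern, so the F\"uredi--Hajnal limit of $\pi$ is $c(\pi)=\lim_{n\to\infty}\mathrm{ex}(n,P_\pi)/n$. The first observation is that a \emph{regular} dense $P_\pi$-avoiding matrix produces exponentially many $\pi$-avoiding permutations: if for all large $n$ there is an $n\times n$ $d$-regular zero--one matrix $M$ avoiding $P_\pi$, then $M/d$ is doubly stochastic, so by the Egorychev--Falikman theorem (van der Waerden's conjecture for permanents), $\mathrm{perm}(M)=d^n\,\mathrm{perm}(M/d)\ge d^n\, n!/n^n\ge (d/e)^n$; since $\mathrm{perm}(M)$ is exactly the number of $n\times n$ permutation matrices contained entrywise in $M$, and each of those is a $\pi$-avoiding permutation, $S_n(\pi)\ge (d/e)^n$, hence $L(\pi)\ge d/e$. (Together with a routine regularization of near-extremal matrices this also recovers $L(\pi)=\Omega(c(\pi))$ in general, but the matrices constructed below will already be regular, so no regularization is needed.) Moreover a \emph{single} finite $d$-regular $P_\pi$-avoiding matrix $M_0$ already suffices: forming $I_m\otimes M_0$ (that is, $m$ diagonal copies of $M_0$) preserves $d$-regularity and still avoids $P_\pi$ whenever $\pi$ is $\oplus$-indecomposable — which holds for almost every $\pi$ — since a copy of $\pi$ meeting two of the diagonal blocks would exhibit $\pi$ as a direct sum of two nonempty permutations. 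So the whole problem reduces to finding, for each $k$, a $k$-permutation $\pi$ together with a finite $d$-regular zero--one matrix avoiding $P_\pi$ with $d=2^{\Omega(k^{1/4})}$.

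For the construction I would take $\pi$ to be a uniformly random $k$-permutation and build $M_0$ as a recursive ``grid of blocks'': partition a large square into a $b\times b$ array of equal blocks, place nonzero blocks in the positions of a $\delta$-regular $b\times b$ zero--one matrix $\beta$ (itself chosen random-like), fill each nonzero block with a smaller object of the same kind, and bottom out after a fixed number of levels in all-ones blocks of a small size; the row density then multiplies along the levels. The structural fact one needs is that a copy of a permutation inside such a matrix forces that permutation to split, level by level, into intervals of positions and intervals of values whose coarse pattern embeds into $\beta$ with only boundedly many points per cell; a union bound over the possible interval splittings of a random $\pi$ — using that a randomly chosen block of a random permutation almost never has its values confined to a short value-interval — then shows that with high probability $\pi$ admits no valid splitting, so $M_0$ avoids $P_\pi$. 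The catch, and the real content of the theorem, is that the naive instances of this scheme give only a \emph{polynomial} lower bound on $c(\pi)$: the parameters $b$ and $\delta$, the bottom block size, the number of levels, and the structure of $\beta$ at each level must be tuned together, in a narrow regime, so that the per-level density gains compound to $2^{\Omega(k^{1/4})}$ while the alphabet size $k$ — which grows roughly like a product of per-level quantities, and which also controls the size of the union bound — stays small enough for the argument to close. The exponent $1/4$ is exactly what this balancing permits. Since almost every $\pi$ works, in particular one does, proving the theorem (and, with a bit more care, its ``almost all'' version stated in the abstract).

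The permanent inequality, the diagonal tiling, and the $\oplus$-indecomposability of a random permutation are routine; the main obstacle is the recursive block construction and its analysis. One has to make two things compatible: the density must compound to something exponential in a power of $k$ — rather than polynomial in $k$, which is what the direct constructions yield, so the block patterns at successive levels must themselves be rich dense matrices whose avoidance properties feed the next level — and, simultaneously over all levels and all scales, the number of ways a random $\pi$ could be split across the grid must stay small enough to be eliminated by a union bound. Reconciling these is where the difficulty lies, and it is the reason the bound emerges as $2^{\Omega(k^{1/4})}$ rather than, say, $2^{\Omega(k)}$.
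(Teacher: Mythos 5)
Your outer reduction is fine, and it is genuinely different from the paper's: the paper converts a dense avoiding matrix into a lower bound on the F\"uredi--Hajnal limit $c(\pi)$ via super-additivity of $\mathrm{ex}(n,\pi)$ and then invokes Cibulka's inequality $c(\pi)=O(L(\pi)^{4.5})$ (or, for a weaker exponent, a direct count via Hall's theorem), whereas you go straight to counting permutation matrices inside a regular avoiding matrix via Egorychev--Falikman, plus diagonal copies and $\oplus$-indecomposability. That route is legitimate and, for a regular matrix, even gives the stronger conclusion $L(\pi)\geq d/e$. One caveat: your parenthetical claim that a ``routine regularization'' yields $L(\pi)=\Omega(c(\pi))$ in general is not routine --- extracting high minimum degree from a near-extremal matrix is easy, but minimum degree neither gives regularity nor Hall's condition, and the best known bound in that direction is Cibulka's polynomial relation. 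Since you only use the regular case, this does not sink the reduction.

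The genuine gap is that the heart of the theorem --- the construction of a single $k$-permutation $\pi$ and a dense matrix avoiding it with row sums $2^{\Omega(k^{1/4})}$ --- is not carried out; your second and third paragraphs describe a recursive block scheme and then explicitly defer the analysis (``the parameters must be tuned together, in a narrow regime,'' ``the union bound \ldots must stay small enough''), asserting rather than deriving the exponent $1/4$. As stated, the sketch also has a structural problem: a copy of $\pi$ need not distribute with ``boundedly many points per cell'' (it can sit entirely inside one block and recurse), and a union bound over all ways a $k$-permutation can split across a multi-level grid ranges over an enormous family of events with heavy dependencies, so it is not clear it can close at any density, let alone at $2^{\Omega(k^{1/4})}$. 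The paper's proof succeeds precisely by supplying a device that collapses all embeddings into few independent events: first it replaces $\pi$ by the all-ones pattern $J_\ell$ with $\ell=\sqrt{k}$, using the explicit $k$-permutation $\pi(a\ell+b+1)=b\ell+a+1$ whose matrix contains $J_\ell$ as an \emph{interval minor}, so it suffices to build an $N\times N$ matrix avoiding $J_\ell$ as an interval minor; then it defines $M$ by retaining each dyadic rectangle of $[N]\times[N]$ independently with probability $1-q$ and setting an entry to one iff all $(r+1)^2$ dyadic rectangles through it survive. Any interval-minor copy of $J_\ell$ in $M$ then induces, by taking smallest enclosing dyadic intervals, an honest $J_\ell$ submatrix of a small auxiliary matrix $B$ with i.i.d.\ Bernoulli$(1-q)$ entries, and a one-line first-moment bound on $(\text{mass of }M)-N^2\cdot(\#J_\ell\text{ in }B)$ yields, with $q=\ell^{-1/2}$, $r=\ell^{1/2}/8$, a matrix of mass at least $N^{3/2}$ with $N=2^{\Omega(k^{1/4})}$ avoiding $J_\ell$ as an interval minor, hence avoiding $\pi$. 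Nothing in your proposal plays the role of this collapse, so the key construction remains unproved.
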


With a slightly weaker bound, we can simultaneously avoid almost all $k$-permutations. For a family $U$ of permutations, let $S_n(U)$ be the number of $n$-permutations which avoid all permutations in $U$. Stanley \cite{St08} has asked if for each finite set $U$ of permutations, $\lim_{n \to \infty} S_n(U)^{1/n}$ exists. If this limit exists, we denote it by $L(U)$. 

\begin{theorem}\label{mainfam}
For each $k$, there is a family $U$ consisting of almost all $k$-permutations such that $L(U)$ exists and satisfies $L(U) =2^{\Omega\left((k/\log k)^{1/4}\right)}$.
\end{theorem}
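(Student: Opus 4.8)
The plan is to get more out of the construction behind Theorem~\ref{main1} than its statement records. That construction produces, for an integer parameter $m$, a permutation class $\mathcal{F}=\mathcal{F}(m)$ of growth rate $2^{\Omega(m^{1/4})}$ --- so $|\mathcal{F}\cap S_n|\ge 2^{\Omega(m^{1/4})\,n}$ for all large $n$ --- together with an $m$-permutation that $\mathcal{F}$ avoids. The feature I would exploit is that the members of $\mathcal{F}(m)$ are built from a rigid template, so a single member can display only a limited number of distinct long patterns: I would prove a bound of the shape ``the number of distinct $k$-permutations occurring as a sub-pattern of some member of $\mathcal{F}(m)$ is at most $h(m,k)$'', with $h$ read off from the template (each such $k$-pattern being pinned down by the template-coordinates of its $k$ points together with a bounded amount of local data).

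Granting such a bound, I would choose the internal parameter to be $m\asymp k/\log k$, the largest value for which $h(m,k)=o(k!)$; this tuning is exactly the source of the $\log k$ loss relative to Theorem~\ref{main1}. Let $B$ be the union of the set of $k$-patterns occurring in $\mathcal{F}(m)$ with the set of all sum-decomposable $k$-permutations; both have size $o(k!)$, hence so does $B$. Put $U:=S_k\setminus B$. Then $U$ consists of almost all $k$-permutations, every member of $U$ is sum-indecomposable, and every permutation of $\mathcal{F}(m)$ has all of its $k$-element sub-patterns inside $B$, so it avoids every permutation of $U$. Consequently $S_n(U)\ge |\mathcal{F}(m)\cap S_n|\ge 2^{\Omega((k/\log k)^{1/4})\,n}$ for all large $n$.

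It remains to see that $L(U)=\lim_n S_n(U)^{1/n}$ exists. Since every $\tau\in U$ is sum-indecomposable, an occurrence of $\tau$ in a direct sum $\sigma_1\oplus\sigma_2$ cannot split across the summands and so lies entirely in one of them; hence if $\sigma_1$ and $\sigma_2$ each avoid $U$, so does $\sigma_1\oplus\sigma_2$, giving the super-multiplicativity $S_{a+b}(U)\ge S_a(U)\,S_b(U)$. By Fekete's lemma the limit exists and equals $\sup_n S_n(U)^{1/n}$, which together with the lower bound on $S_n(U)$ yields $L(U)=2^{\Omega((k/\log k)^{1/4})}$; finiteness of $L(U)$ follows from $S_n(U)\le S_n(\pi)$ for any fixed $\pi\in U$ and Marcus--Tardos.

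The main obstacle is the middle ingredient: extracting from the Theorem~\ref{main1} construction a bound $h(m,k)$ on how many distinct $k$-patterns it can exhibit that is strong enough to be $o(k!)$ at $m\asymp k/\log k$, while still keeping the $2^{\Omega(m^{1/4})}$ growth rate. A crude pattern count would force $m$ to be only polynomially smaller than $k$ and would weaken the exponent, whereas too rigid a template would destroy the exponential growth. Describing the template precisely enough to count its $k$-patterns, yet loosely enough to retain the growth rate, is where the work lies, and it is what pins the optimal choice of $m$ at $\Theta(k/\log k)$.
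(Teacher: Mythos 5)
Your outer skeleton matches the paper's: restrict to sum-indecomposable $k$-permutations so that $S_n(U)$ is super-multiplicative and Fekete gives existence of $L(U)$, take the avoiding structure from the Theorem~\ref{main1} construction at an internal parameter of order $k/\log k$, and argue it avoids almost all $k$-permutations. But the step you yourself flag as ``the main obstacle'' is exactly the content of the theorem, and the mechanism you propose for it --- pinning each $k$-pattern of the construction by ``template coordinates plus a bounded amount of local data'' and counting --- does not work as stated. The construction is a dense $N \times N$ matrix with $N = 2^{\Theta((k/\log k)^{1/4})}$, so any count of the form (choices of $k$ rows) $\times$ (choices of $k$ columns) $\times$ (bounded local data) is at least on the order of $\binom{N}{k}^2 = 2^{\Theta\left(k\,(k/\log k)^{1/4}\right)}$, which vastly exceeds $k!$; and the trivial upper bound via avoidance of the $m$-permutation, $S_k(\pi_0) \le L(\pi_0)^k = 2^{O(mk)}$ with $m \asymp k/\log k$, also exceeds $k!$. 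So no generic counting of the patterns the construction \emph{contains} gets you to $o(k!)$, and you have not supplied the non-generic structural input that would.

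The paper's missing ingredient is a duality rather than a count: the matrix $M$ of Theorem~\ref{mainlater} is built to avoid $J_\ell$ \emph{as an interval minor}, and Lemma~\ref{lemrandom} (a one-line first-moment computation over an $\ell \times \ell$ grid of blocks) shows that once $k \ge 3\ell^2 \ln \ell$, almost every $k$-permutation \emph{contains} $J_\ell$ as an interval minor. Hence, taking $\ell = (k/(3\ln k))^{1/2}$ and $U$ to be the sum-indecomposable $k$-permutations containing $J_\ell$ as an interval minor, $M$ avoids every member of $U$ while $U$ comprises almost all $k$-permutations; the threshold $k \ge 3\ell^2\ln\ell$ is precisely the source of the $\log k$ loss you anticipated. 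One further detail you gloss over: to get the stated exponent $1/4$ (rather than the $1/6$ that the direct Hall's-theorem counting of permutations in $M$ yields), the paper passes through the F\"uredi--Hajnal quantity, using super-additivity of $\mathrm{ex}(n,U)$ to get $c(U) \ge N^{1/2}$ and then Cibulka's inequality $c(U) = O(L(U)^{4.5})$; simply asserting that the class built from $M$ has growth rate $2^{\Omega(m^{1/4})}$ is not justified without this (or an equivalent) conversion.
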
  

A matrix is {\it binary} if its entries are $0$ or $1$. All matrices we consider in this paper are binary. Matrix $A$ {\it contains} a $k \times \ell$ matrix $P=(p_{ij})$ if there exists a $k \times \ell$ submatrix $D=(d_{ij})$ of $A$ with $d_{ij}=1$ whenever $p_{ij}=1$. Otherwise we say that $A$ {\it avoids} $P$. 

The {\it mass} of a  matrix is the number of its one-entries. Equivalently, the mass of a matrix is the sum of all its entries. Let $\textrm{ex}(n,P)$ be the maximum possible mass of an $n \times n$  matrix that avoids $P$. For a permutation $\pi$ with matrix $P$, we say {\it $A$ avoids $\pi$} if it avoids $P$ and we let $\textrm{ex}(n,\pi)=\textrm{ex}(n,P)$. F\"uredi and Hajnal conjectured that, for each permutation $\pi$, we have $\textrm{ex}(n,\pi) =O(n)$. 

The function $\textrm{ex}(n,\pi)$ is super-additive. Indeed, by symmetry, we may assume the first letter in $\pi$ is larger than the last letter in $\pi$, and then the direct sum of two matrices which avoid $\pi$ also avoids $\pi$. More generally, Pach and Tardos (Lemma 1(ii) in \cite{PT06}) showed that $\textrm{ex}(n,P)$ is super-additive. Marcus and Tardos \cite{MT} proved that  \begin{equation}\label{MTineq}\textrm{ex}(n,\pi) \leq 2k^4{k^2 \choose k}n\end{equation} holds for every $k$-permutation $\pi$. It follows from this linear bound and the fact that $\textrm{ex}(n,\pi)$ is super-additive that, as $n$ tends to infinity,  $\frac{\textrm{ex}(n,\pi)}{n}$ tends to a finite limit $c(\pi)$. The number $c(\pi)$ is known as the {\it F\"uredi-Hajnal} limit of $\pi$.  The upper bound (\ref{MTineq}) implies $c(\pi)=2^{O(k\log k)}$.

Klazar's proof \cite{K00} that the F\"uredi-Hajnal conjecture implies the Stanley-Wilf conjecture shows that $L(\pi)=2^{O(c(\pi))}$. Cibulka \cite{C09} recently examined the relationship between the Stanley-Wilf limit $L(\pi)$ and the F\"uredi-Hajnal limit $c(\pi)$, showing that they are polynomially related. In one direction he proved  $c(\pi)=O(L(\pi)^{4.5})$. In the other direction, he improved Klazar's upper bound on $L(\pi)$ to $L(\pi)=O(c(\pi)^2)$. A simple proof of this result is given in Section \ref{latersect}. This result implies the improved bound $L(\pi)=2^{O(k \log k)}$ on the Stanley-Wilf limit. Thus, Theorem \ref{mainfam} shows that the Stanley-Wilf limits are typically exponential. 

To prove Theorems \ref{main1} and \ref{mainfam}, in Section \ref{sectlb} we construct a very dense matrix of exponential size  which avoids almost all $k$-permutations. By super-additivity of $\textrm{ex}(n,\pi)$, this implies a lower bound on $c(\pi)$ and hence we get a lower bound on $L(\pi)$ as well. 

We also improve the upper bound on $c(\pi)$ and $L(\pi)$. 

\begin{theorem}\label{newthm}
For every $k$-permutation $\pi$, we have $c(\pi)=2^{O(k)}$ and $L(\pi)=2^{O(k)}$. 
\end{theorem}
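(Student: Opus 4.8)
The plan is to reduce both conclusions to a single estimate, namely $\textrm{ex}(n,\pi)\le 2^{O(k)}n$ for every $n$. Granting it, super-additivity of $\textrm{ex}(n,\pi)$ (Pach--Tardos) gives $c(\pi)=\lim_{n\to\infty}\textrm{ex}(n,\pi)/n=2^{O(k)}$, and then Cibulka's inequality $L(\pi)=O(c(\pi)^2)$, whose short proof appears in Section~\ref{latersect}, gives $L(\pi)=O\!\big((2^{O(k)})^2\big)=2^{O(k)}$. So the whole task is to improve the Marcus--Tardos bound (\ref{MTineq}), which has the form $2^{\Theta(k\log k)}n$, to $2^{O(k)}n$; concretely, to remove the factor $\binom{k^2}{k}$.

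I would first pin down where that factor comes from. Cutting an $n\times n$ matrix $A$ avoiding the permutation matrix $P$ of $\pi$ into $s\times s$ blocks, call a block \emph{tall} if it has ones in at least $k$ rows (and \emph{wide} symmetrically); a block-row can contain at most $(k-1)\binom{s}{k}$ tall blocks, since otherwise $k$ tall blocks occupy a common $k$-subset of rows and, their block-columns being automatically increasing, one extracts a copy of $\pi$. Every other block has its ones inside a $(k-1)\times(k-1)$ subgrid, hence mass at most $(k-1)^2$, and the number of nonempty blocks is at most $\textrm{ex}(n/s,\pi)$ because the nonempty-block pattern again avoids $P$. This produces the recursion $\textrm{ex}(n,\pi)\le (k-1)^2\,\textrm{ex}(n/s,\pi)+2s(k-1)\binom{s}{k}n$, which telescopes to a linear bound precisely when $s>(k-1)^2$; taking $s=k^2$ then forces $\binom{k^2}{k}$. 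To reach $2^{O(k)}$ one wants $s=O(k)$, so that the pigeonhole factor becomes $\binom{O(k)}{k}=2^{O(k)}$ --- but at that block size the constant $(k-1)^2$ attached to the sparse blocks exceeds $s$ and the recursion no longer contracts.

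So the crux, and what I expect to be the main obstacle, is controlling the sparse blocks once $s=O(k)$. My plan is to stratify by scale: for each $\ell$ with $2\le \ell\le k$, treat separately the blocks having ones in at least $\ell$ but fewer than $2\ell$ rows (and symmetrically for columns), bound the number of such blocks per block-row by $2^{O(k)}$, and add up the mass contributions over the $O(\log k)$ scales, using a single global application of the nonempty-block bound for the genuinely bounded-mass blocks at the bottom scale. The statement to establish at each scale is a Marcus--Tardos-type lemma: a block-row cannot contain too many $\ell$-tall blocks. For $\ell\ge k$ this is the original argument; for $\ell<k$ the idea is that a copy of $\pi$ should be assemblable from ones drawn from $\lceil k/\ell\rceil$ distinct blocks --- each block realizing a run of $\ell$ consecutive positions of $\pi$ --- so that after a pigeonhole over $2^{O(k)}$ many coarse row-signatures (say, which $\ell$-subset of a fixed $O(k)$-element set of rows a block occupies) one locates the needed blocks in the right left-to-right order. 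The delicate point will be to run this uniformly in $\ell$ with a single $2^{O(k)}$ bound: the signatures must be coarse enough to number only $2^{O(k)}$, yet fine enough that a repeated signature really forces a full copy of $\pi$, and the sum over the $O(\log k)$ scales must not reintroduce a $\log k$ in the exponent.
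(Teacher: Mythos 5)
Your reduction is the same as the paper's: prove $\textrm{ex}(n,\pi)\le 2^{O(k)}n$, deduce $c(\pi)=2^{O(k)}$ by super-additivity, and finish with Cibulka's $L(\pi)=O(c(\pi)^2)$ from Section~\ref{latersect}. But the heart of the matter, improving the Marcus--Tardos bound (\ref{MTineq}), is where your plan has a genuine gap. Your multi-scale scheme hinges on a Marcus--Tardos-type lemma for $\ell$-tall blocks with $\ell<k$: that among $2^{O(k)}$ blocks in a block-row, each having ones in at least $\ell$ rows, one can assemble a copy of $\pi$ from $\lceil k/\ell\rceil$ of them, after a pigeonhole on coarse row-signatures. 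As stated this does not work: in the original argument each selected block contributes a \emph{single} one, so matching row-subsets immediately yield $\pi$; once a block must contribute $\ell\ge 2$ ones realizing a prescribed run of $\pi$, knowing only which rows the block occupies says nothing about the \emph{order} of its ones within the block (a block meeting every row of the common row-set may do so with a decreasing pattern, say), so repeated signatures do not force the needed sub-patterns, and no uniform-in-$\ell$ bound is proved. You flag this yourself as the ``delicate point,'' but it is exactly the step that needs a new idea, and the summation over $O(\log k)$ scales is moot until it is supplied.

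The paper resolves this by changing the containment notion rather than the block scale. Since $J_k$ contains every $k$-permutation, it suffices to bound $m(n,J_k)$, the extremal function for avoiding $J_k$ as an \emph{interval minor} (Theorem~\ref{mainab1}: $m(n,J_k)\le 3k2^{8k}n$); for an interval minor one only needs a single one-entry in each of the $k\times k$ cell-blocks, so assembling the forbidden structure from many blocks requires no intra-block order conditions --- precisely the difficulty your signatures cannot handle. The paper then takes the block size and the wide/tall threshold \emph{exponentially} large ($t=2^{2k}$, $s=2^{k-1}$), so that sparse blocks have mass at most $(s-1)^2<t/4$ and the recursion of Lemma~\ref{verygen} contracts, and it bounds the number of wide blocks per block-column not by a pigeonhole over column-subsets (the source of $\binom{k^2}{k}$) but by the halving recursion $f_{r,k}(t,s)\le 2f_{r,k}(t/2,s)+2f_{r,k-1}(t/2,s/2)$, giving $f_{r,k}(t,s)\le r2^{k-1}t^2/s=2^{O(k)}$ (Lemma~\ref{lablab}). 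If you want to salvage your route, you would need either to pass to interval minors as the paper does, or to find a genuinely different mechanism forcing $\pi$ from many $\ell$-tall blocks; the row-signature pigeonhole alone will not do it.
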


As discussed in Section \ref{sectconc}, this improvement on the Marcus-Tardos bound also implies an improved running time of $2^{O(k^2)}n$ on the Guillemot-Marx algorithm \cite{GM14} for determining whether an $n$-permutation $\sigma$ contains a $k$-permutation $\pi$. 

{\bf Organization} In the next section, we introduce the notion of interval minors of a matrix, and relate it to containment of a permutation matrix. We then prove Theorem \ref{main1} in Section \ref{sectlb}. In Section \ref{latersect}, we give a new simple proof of a result of Cibulka \cite{C09} giving an upper bound on the Stanley-Wilf limit which is quadratic in the F\"uredi-Hajnal limit. In Section \ref{sectnewthm}, we prove Theorem \ref{newthm}, which gives an improved upper bound on Stanley-Wilf limits. In Section \ref{sectconc}, we present some concluding remarks and open problems. 

All logarithms are base $2$ unless otherwise stated. For the sake of clarity of presentation, we systematically omit all floor and ceiling signs whenever they are not crucial. We also do not make any serious attempt to optimize constants in our statements and proofs. 

\section{Interval minors} 

Many combinatorial problems concern containment of substructures in larger structures. For example, in graph theory, some common containments studied include subgraph, induced subgraph, minor, topological minor or subdivision, and immersion.  Here we will study an analogue of graph minor for matrices, where instead of contracting adjacent vertices, we consider contracting consecutive rows or columns of the matrix. 

The {\it interval contraction} of two consecutive rows of a  matrix replaces the two rows by a single row, placing a one in an entry of the new row if at least one of the two entries in the original two rows is a one, and otherwise placing a zero in that entry of the new row. Interval contraction of two consecutive columns is defined similarly. A  matrix $P=(p_{ij})$ is an {\it interval minor} of another  matrix $A=(a_{ij})$ if $P$ is contained in a matrix obtained from $A$ by interval contraction. We say $A$ {\it avoids $P$ as an interval minor} if $P$ is not an interval minor of $A$. 

Equivalently, a $k \times \ell$ matrix $P$ is an interval minor of a matrix $A$ if 
\begin{itemize} 
\item there are $k$ disjoint intervals of rows $I_1,\ldots,I_k$ with $I_i$ coming before $I_j$ if $i<j$, 
\item $\ell$ disjoint intervals of columns $L_1,\ldots,L_{\ell}$ with $L_i$ coming before $L_j$ if $i<j$, 
\item and for all $(a,b) \in [k] \times [\ell]$, if $p_{ab}=1$, then the submatrix $I_a \times L_b$ of $A$ contains a one entry. 
\end{itemize} 
An {\it interval of rows} (columns) is a set of consecutive rows (columns). 
By enlarging the intervals of rows if possible, in the above definition we can restrict to sets of intervals of rows that form a partition of the set of rows, and similarly we can restrict to sets of intervals of columns that form a partition of the set of columns. 

This notion has an analogue in graph minors. We may view a  matrix as a bipartite graph with the set of rows and the set of columns as the two parts, with an adjancency between a row and a column if their common entry is a one. The standard notion of contraction in graphs replaces two adjacent vertices by a single vertex whose neighborhood is the union of the neighborhoods of the two vertices it replaced.  For comparison, interval contraction replaces two consecutive vertices by a single vertex whose neighborhood is the union of the neighborhoods of the two vertices. Thus, interval contraction replaces ``adjacent'' by ``consecutive''.

As is standard, we use $J_k$ to denote the $k \times k$ matrix which is all ones.  Of course, $J_k$ contains every $k$-permutation. The following lemma is a partial converse of this fact.

\begin{lemma}\label{lemexists}
There is an $\ell^2$-permutation whose matrix contains $J_{\ell}$ as an interval minor. 
\end{lemma}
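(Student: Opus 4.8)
The plan is to make the statement concrete and then exhibit an explicit permutation. Unwinding the definition of interval minor with $P = J_\ell$ and a permutation matrix $A$ of size $\ell^2 \times \ell^2$: I need a permutation $\pi$ of $[\ell^2]$ together with a partition of the rows into $\ell$ consecutive intervals $I_1 < \cdots < I_\ell$ and of the columns into $\ell$ consecutive intervals $L_1 < \cdots < L_\ell$ so that for every pair $(a,b) \in [\ell]\times[\ell]$ the block $I_a \times L_b$ of $A$ contains a one-entry. The natural choice is to take all the intervals to have equal size $\ell$, i.e.\ $I_a = L_a = \{(a-1)\ell + 1, \ldots, a\ell\}$. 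With this choice, each $I_a$ has exactly $\ell$ rows, so the $\ell$ one-entries coming from $\pi$ restricted to $I_a$ must hit all $\ell$ column-intervals; equivalently, $\pi(I_a)$ must be a transversal of the column partition, and we need this for every $a$.

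The construction I would use is the ``block transpose'' permutation. Identify $[\ell^2]$ with $[\ell] \times [\ell]$ via $x \mapsto (i,j)$ where $x = (i-1)\ell + j$ with $i,j \in [\ell]$; here $i$ is the index of the interval $I_x$ belongs to and $j$ is the position of $x$ within that interval. Define $\pi$ by $\pi\bigl((i-1)\ell + j\bigr) = (j-1)\ell + i$. Since $(i,j) \mapsto (j,i)$ is an involution on $[\ell]\times[\ell]$, this $\pi$ is a genuine permutation of $[\ell^2]$ (indeed an involution).

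To verify the interval minor property, fix $(a,b)\in[\ell]\times[\ell]$ and consider the element $x = (a-1)\ell + b$. By construction $x \in I_a$, and $\pi(x) = (b-1)\ell + a \in L_b$, so the one-entry of $A$ in row $x$ lies in the block $I_a \times L_b$. Hence every block $I_a \times L_b$ is nonempty, which is exactly the condition for $J_\ell$ to be an interval minor of the matrix of $\pi$. I do not anticipate a real obstacle here: the only things to check are that the map is a permutation (immediate, as it is conjugate to the coordinate swap by the bijection $[\ell^2]\cong[\ell]^2$) and that the chosen row/column intervals genuinely partition $[\ell^2]$ into consecutive blocks of size $\ell$ (immediate from the definition). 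The mild cleverness is just in noticing that the $\ell\times\ell$ grid structure of $[\ell^2]$ lets one trade ``position within a block'' for ``which block'' via the transpose, which is precisely what forces each $\pi(I_a)$ to be a transversal.
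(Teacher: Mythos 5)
Your proof is correct and is essentially the paper's own argument: the ``block transpose'' permutation $\pi((i-1)\ell+j)=(j-1)\ell+i$ is exactly the permutation used in the paper (written there as $\pi(a\ell+b+1)=b\ell+a+1$), and the verification via equal consecutive blocks of size $\ell$ is the same.
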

\begin{proof}
Consider the $\ell^2$-permutation $\pi$ defined by $\pi(a\ell+b+1)=b\ell+a+1$ for $0 \leq a,b \leq \ell-1$. Partitioning the set of rows and the set of columns of the permutation matrix $A$ of $\pi$ into intervals of length $\ell$, each of the $\ell \times \ell$ blocks has a one in it. Hence, contracting these intervals, we get that $A$ contains $J_\ell$ as an interval minor.
\end{proof}

Note that the bound $\ell^2$ in the above lemma cannot be decreased. Indeed, if a matrix $P$ is an interval minor of another matrix $A$, then the mass of $A$ is at least the mass of $P$. Since the mass of $J_{\ell}$ is $\ell^2$, any matrix which contains $J_{\ell}$ as an interval minor must have mass at least $\ell^2$. Hence, if a $k$-permutation contains $J_{\ell}$, then $k \geq \ell^2$.

The next lemma shows that random permutations of size a logarithmic factor larger than in the previous lemma almost surely contain the complete matrix $J_\ell$ as an interval minor. 

\begin{lemma}\label{lemrandom} For $k \geq 3\ell^2\ln \ell$, almost all $k$-permutations contain $J_{\ell}$ as an interval minor. 
\end{lemma}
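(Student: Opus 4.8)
\medskip\noindent\textbf{Proof proposal.} The plan is to show that a uniformly random $k$-permutation already contains $J_\ell$ as an interval minor for the crudest possible choice of intervals, namely the equipartition. Let $\pi$ be a uniformly random $k$-permutation with permutation matrix $A$, put $m=k/\ell$ (ignoring divisibility, per the floor/ceiling convention), partition the rows of $A$ into the $\ell$ consecutive intervals $I_1,\dots,I_\ell$ of length $m$, and likewise partition the columns into the $\ell$ consecutive intervals $L_1,\dots,L_\ell$ of length $m$. By the equivalent description of interval minors using intervals that partition the rows and columns, it then suffices to prove that with probability tending to $1$ every one of the $\ell^2$ blocks $I_a\times L_b$ contains a one-entry of $A$.

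Next I would estimate the probability that a fixed block $I_a\times L_b$ is empty. For a uniform random permutation, the set of columns $\{\pi(i):i\in I_a\}$ in which the rows of $I_a$ carry their one-entries is a uniformly random $m$-element subset of $[k]$, and the block is empty exactly when this set avoids $L_b$; hence
\[
\Pr[\,I_a\times L_b\text{ empty}\,]={k-m\choose m}\Big/{k\choose m}=\prod_{j=0}^{m-1}\frac{k-m-j}{k-j}\le\Bigl(1-\tfrac{m}{k}\Bigr)^{m}=\Bigl(1-\tfrac1\ell\Bigr)^{m}\le e^{-m/\ell}=e^{-k/\ell^2},
\]
using $\frac{k-m-j}{k-j}=1-\frac{m}{k-j}\le 1-\frac{m}{k}$ for every $j\ge 0$. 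The hypothesis $k\ge 3\ell^2\ln\ell$ gives $e^{-k/\ell^2}\le e^{-3\ln\ell}=\ell^{-3}$, so a union bound over the $\ell^2$ blocks shows that the probability that some block of $A$ is empty is at most $\ell^2\cdot\ell^{-3}=1/\ell$. As this tends to $0$, almost all $k$-permutations have all $\ell^2$ blocks nonempty and therefore contain $J_\ell$ as an interval minor.

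I do not expect a genuine obstacle: the whole argument is a union bound over the $\ell^2$ equipartition blocks, and the content is that this trivial partition already works, so the bound of Lemma~\ref{lemexists} is tight up to the extra $\ln\ell$ factor. The one point that needs a little care is that the one-entries of a permutation matrix lying in a fixed set of rows are not independent, which is why the block-emptiness probability should be computed as the hypergeometric ratio ${k-m\choose m}/{k\choose m}$ (equivalently, reveal the values $\pi(i)$, $i\in I_a$, one at a time and bound each conditional probability of avoiding $L_b$ by $1-\tfrac1\ell$) rather than treating the columns as chosen independently. The floor/ceiling issue when $\ell\nmid k$ is harmless, since taking every interval to have length at least $\lfloor k/\ell\rfloor$ only strengthens the estimate.
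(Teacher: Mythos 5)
Your proof is correct and follows essentially the same route as the paper: equipartition the rows and columns into $\ell$ intervals of length $k/\ell$, bound the probability that a fixed block is empty by $(1-1/\ell)^{k/\ell}\le e^{-k/\ell^2}\le \ell^{-3}$, and take a union bound over the $\ell^2$ blocks. Your hypergeometric computation merely makes explicit the one-line estimate the paper asserts for the block-emptiness probability.
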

\begin{proof}
In the matrix $A$ of a random $k$-permutation $\pi$, the probability that a given $(k/\ell) \times (k/\ell)$ submatrix has all zeros is at most $(1-1/\ell)^{k/\ell}<e^{-k/\ell^2}$.  Thus, if $k \geq 3\ell^2 \ln \ell$, then this probability is less than $\ell^{-3}$. If the rows of $A$ are partitioned into $\ell$ equal intervals and the columns of $A$ are partitioned into $\ell$ equal intervals, then we have $\ell^2$ blocks, and we get the probability that $A$ avoids $J_{\ell}$ as an interval minor is at most $\ell^2\ell^{-3}=1/\ell$, completing the proof.
\end{proof}

Noga Alon pointed out that the bound in the above lemma is tight up to the constant factor. Indeed, a first moment argument shows that the above lemma is not true for $k<c\ell^2 \ln \ell$, where $c$ is a small positive constant. 

\section{Lower bound construction} \label{sectlb}

We prove the following theorem, which we subsequently show implies Theorems \ref{main1} and \ref{mainfam}. 

The {\it interval} $[a,b]:=\{a,a+1,\ldots,b\}$ consists of all integers between $a$ and $b$. For brevity, we often write $[b]:=[1,b]$. A {\it dyadic interval} is an interval of the form $[(s-1)2^t+1,s2^t]$, where $s$ and $t$ are nonnegative integers. A {\it rectangle} is a product $[a_1,b_1] \times [a_2,b_2]= \{(x,y): x \in [a_1,b_1]~\textrm{and}~y \in [a_2,b_2]\}$ of two intervals. A {\it dyadic rectangle} is a product of two dyadic intervals. 

\begin{theorem}\label{mainlater}
Let $r,\ell$ be positive integers and $0<q<1/2$ with $3 \leq r \leq q\ell/4$. Let $N=2^r$. There is an $N \times N$ matrix $M$ with mass at least $(1-q)^{(r+1)^2}N^2-1$
 which avoids $J_{\ell}$ as an interval minor. 
\end{theorem}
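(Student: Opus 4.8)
The plan is to build $M$ by a recursive/randomized construction that removes just enough mass to destroy every potential $J_\ell$ interval minor, while keeping the mass loss controlled by a product over the $O(r^2)$ ``scales'' of dyadic rectangles. First I would observe that, by the equivalent characterization of interval minors, $M$ contains $J_\ell$ as an interval minor if and only if there are $\ell$ consecutive intervals of rows and $\ell$ consecutive intervals of columns whose $\ell\times\ell$ grid of blocks all contain a one-entry. Since we may always enlarge intervals to dyadic ones (at the cost of at most a factor of $2$ in the number of intervals, which is why $r \le q\ell/4$ leaves room), it suffices to rule out such configurations where all the blocks are \emph{dyadic rectangles}. So the target becomes: construct $M$ so that for every ``dyadic grid'' of $\ell$ dyadic row-intervals and $\ell$ dyadic column-intervals (lying in $[N]\times[N]$, in increasing order, disjoint), at least one of the $\ell^2$ dyadic blocks is entirely zero in $M$.

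The main idea I would pursue is to zero out, for each dyadic rectangle $R$ of each ``type'' $(t_1,t_2)$ with $0 \le t_1,t_2 \le r$ (so $R$ has dimensions $2^{t_1}\times 2^{t_2}$), a small sub-block of $R$ — more precisely, independently for each such $R$ I choose one of its $\ell\times\ell$-ish sub-grid cells and delete it, or equivalently I keep each dyadic rectangle ``alive'' only with probability $(1-q)$ in a way that is independent across rectangles of the same shape but nested consistently. The cleanest route: consider the $(r+1)^2$ shapes of dyadic rectangles; at shape $(t_1,t_2)$ there is a partition of $[N]\times[N]$ into such rectangles, and I pick uniformly at random a $q$-fraction of the area of each to be forbidden (deleted). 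A given one-entry of $J_N$ survives all $(r+1)^2$ rounds with probability $(1-q)^{(r+1)^2}$, so the expected mass of $M$ is $(1-q)^{(r+1)^2}N^2$, giving the claimed bound (the $-1$ absorbing a rounding loss). The point of doing this across \emph{all} shapes is that any dyadic block $I_a\times L_b$ appearing in a potential interval minor is, after truncating to the relevant scale, one of these dyadic rectangles, and I need to argue that in any valid dyadic grid at least one block has been fully zeroed.

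The delicate step — and the one I expect to be the main obstacle — is making the deletion pattern simultaneously (i) lose only a $(1-q)^{(r+1)^2}$ factor of mass and (ii) guarantee that \emph{every} dyadic $\ell\times\ell$ grid has an all-zero block. A purely independent random deletion gives (i) in expectation but not (ii) deterministically, and a union bound over all dyadic grids is hopeless since there are exponentially many. The resolution I would aim for is to make the construction \emph{recursive and self-similar}: define $M$ at size $N=2^r$ from a size-$N/2$ (or size-$N/\ell$) version, deleting one designated sub-rectangle at the top scale and recursing into the surviving pieces, so that an inductive invariant — ``$M$ of size $2^r$ avoids $J_\ell$ as an interval minor provided $r$ is not too large relative to $\ell$'' — can be pushed through. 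The induction hypothesis must be strong enough that when $\ell$ dyadic row-intervals are placed, the pigeonhole forces two of them (or their refinements) into a common coarse block where a deletion has already been made at a previous scale; this is where the constant $4$ and the hypothesis $r \le q\ell/4$ get used, to ensure that after refining to dyadic intervals one still has $\ge 2r$ of them, enough to force a collision with a deleted coarse sub-block. Verifying that the mass accounting over the recursion telescopes exactly to $(1-q)^{(r+1)^2}N^2 - 1$, and that the combinatorial pigeonhole genuinely kills every dyadic grid, is the technical heart of the argument; the rest (reducing general interval minors to dyadic ones, and deducing Theorems~\ref{main1} and \ref{mainfam} via super-additivity of $\textrm{ex}(n,\pi)$) is comparatively routine.
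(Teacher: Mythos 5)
Your random construction is the right one in spirit (select each dyadic rectangle independently with probability $1-q$ and keep an entry only if every dyadic rectangle containing it is selected, so each entry survives with probability $(1-q)^{(r+1)^2}$), and the expected-mass count matches the theorem. But the proof stops exactly at the theorem's heart: you never actually prove that the resulting matrix avoids $J_\ell$ as an interval minor. You explicitly defer the ``technical heart'' to an unexecuted recursive, self-similar construction with an unspecified pigeonhole invariant and unspecified mass accounting, so as written this is a plan, not a proof. Moreover, the reason you abandon the probabilistic route is mistaken. You claim a union bound over all dyadic configurations is hopeless because there are exponentially many; in fact the count is at most $\binom{2N-1}{\ell}^2\le (2N)^{2\ell}=2^{(2r+2)\ell}$, while each configuration survives (all $\ell^2$ of its dyadic rectangles selected) with probability $(1-q)^{\ell^2}\le e^{-q\ell^2}$, and the hypothesis $r\le q\ell/4$ is there precisely so that $2^{(2r+2)\ell}e^{-q\ell^2}$ is negligible. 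The paper runs this as a single first-moment argument: with $B$ the $(2N-1)\times(2N-1)$ auxiliary matrix indexed by all dyadic intervals, whose $(I,J)$ entry records whether $I\times J$ was selected, one takes $X=\mathrm{mass}(M)-N^2 J_\ell(B)$, checks $\mathbb{E}[X]>(1-q)^{(r+1)^2}N^2-N^{-\ell}>0$, and fixes an outcome with $X\ge\mathbb{E}[X]$; then $B$ contains no $J_\ell$ and the mass bound holds simultaneously.

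Your reduction to dyadic configurations is also flawed as stated. You cannot in general enlarge $\ell$ disjoint intervals to $\ell$ disjoint dyadic intervals at the cost of ``a factor of $2$'': intervals straddling nested dyadic midpoints (e.g.\ $[8,9],[4,5],[2,3]$ in $[16]$) have minimal dyadic supersets forming a chain, so one may lose a factor of order $r$, not $2$, if disjointness is insisted upon. The correct observation, which is what the paper uses, is weaker and suffices: the minimal dyadic supersets $v_1,\ldots,v_\ell$ (resp.\ $w_1,\ldots,w_\ell$) of disjoint intervals are pairwise \emph{distinct}, though possibly nested, and a one-entry of $M$ inside $I_a\times L_b$ certifies that the dyadic rectangle $v_a\times w_b$ was selected; hence a $J_\ell$ interval minor of $M$ produces a $J_\ell$ \emph{submatrix} of $B$ on rows $v_1,\ldots,v_\ell$ and columns $w_1,\ldots,w_\ell$, for which distinctness (not disjointness) is all that is needed. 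Without this step, and without a worked-out substitute for the first-moment argument you reject, the proposal has a genuine gap.
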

\begin{proof} Let $\mathcal{I}$ denote the collection of all dyadic intervals $I \subset [N]$, and $\mathcal{S}$ be the collection of all dyadic rectangles $R \subset [N] \times [N]$. Note that each $i \in [N]$ is in exactly  $r+1$ intervals in $\mathcal{I}$, so each entry of $M$ is in exactly $(r+1)^2$ rectangles in $\mathcal{S}$. Let $\mathcal{R}$ be a random subcollection of $\mathcal{S}$, where each dyadic rectangle appears in $\mathcal{R}$ with probability $1-q$, independently of the other dyadic rectangles. Let $M$ be the $N \times N$ matrix where an entry of $M$ is one if each of the $(r+1)^2$ rectangles in $\mathcal{S}$ containing it are also in $\mathcal{R}$, and zero otherwise. It follows that each entry of $M$ is one with probability $(1-q)^{(r+1)^2}$. By linearity of expectation, the expected mass of $M$ is $(1-q)^{(r+1)^2}N^2$. 

Let $N'=|\mathcal{I}|$, so $N'=\sum_{i=0}^r 2^i = 2N-1$. We also consider an auxiliary $N' \times N'$ matrix $B$, which has a row for each $I \in \mathcal{I}$ and a column for each $J \in \mathcal{I}$, and the $(I,J)$ entry of $B$ is one if $I \times J \in \mathcal{R}$ and zero otherwise. Hence, each entry of $B$ is one with probability $1-q$, independently of the other entries. 

Let $J_{\ell}(B)$ denote the number of copies of $J_{\ell}$ in $B$. Consider the random variable $$X: = \textrm{mass of }M \, - \, N^2 J_{\ell}(B).$$ By linearity of expectation, we have $$\mathbb{E}[X]=(1-q)^{(r+1)^2}N^2-N^2{N' \choose \ell}^2(1-q)^{\ell^2} >(1-q)^{(r+1)^2}N^2- N^{2\ell+2}e^{-q\ell^2} > (1-q)^{(r+1)^2}N^2-N^{-\ell},$$ 
where we used $\ell \geq 4$, $e^{-q} > 1-q$, $N=2^r$, and $r \leq q\ell/4$.

Fix a choice of $\mathcal{R}$ with $X \geq \mathbb{E}[X]$. Note that $X>0$ as $(1-q)^{(r+1)^2}>2^{-4qr^2}=N^{-4qr} \geq N^{-\ell}$.
Since $X>0$, it follows that the number of copies of $J_{\ell}$ in $B$ is $0$, i.e., $B$ avoids $J_{\ell}$. Also, the mass of $M$ is $X$. 

We will use the fact that $B$ avoids $J_{\ell}$ to show that $M$ avoids $J_{\ell}$ as an interval minor. 
Suppose for the sake of contradiction that $M$ contains $J_{\ell}$ as an interval minor, so there are disjoint intervals of rows $I_1, \ldots, I_{\ell}$  of $M$ and disjoint intervals of columns $L_1,\ldots,L_{\ell}$ of $M$, such that for each $(a,b) \in [\ell]^2$, the submatrix of $M$ with row set $I_a$ and column set $L_b$  contains at least one one-entry. 

We associate to each interval of rows $I_a$ the smallest dyadic interval $v_a \in \mathcal{I}$ that is a superset of $I_a$, and to each interval of columns $L_b$ the smallest dyadic interval $w_b \in \mathcal{I}$ that is a superset of $L_b$. 

The dyadic intervals $v_1,\ldots,v_{\ell}$ are distinct, and similarly, the dyadic intervals $w_1,\ldots,w_{\ell}$ are distinct. Indeed, this follows from the fact that if an interval $I$ is partitioned into two subintervals $I'$ and $I''$, and $I_a$ and $I_b$ are disjoint subintervals of $I$, then at least one of $I_a$ or $I_b$ is a subset of $I'$ or $I''$. 

As there is a one in the submatrix with row set $I_a$ and column set $L_b$, $I_a \subset v_a$, and $L_b \subset w_b$, then the $(v_a,w_b)$ entry in $B$ must be a one. Therefore, $B$ contains $J_{\ell}$ as a submatrix with rows $v_1,\ldots,v_{\ell}$ and columns $w_1,\ldots,w_{\ell}$. This contradicts that $B$ avoids $J_{\ell}$, and completes the proof. 
\end{proof}

Noga Alon had the nice idea of using the random variable $X$ in the proof. An earlier write-up showed that the probability that the mass of $M$ is large is greater than the probability that $B$ contains $J_{\ell}$. 

We think that the use of random dyadic rectangles, as in the above proof, might be useful for other ordered extremal problems as well. A different model of random dyadic rectangles, where the rectangles are of equal area and form a tiling, was first considered by Janson, Randall, and Spencer \cite{JRS}, and in the recent paper \cite{Ang} (see also \cite{CLSW}) . 

Applying Lemma \ref{lemexists} with $\ell=k^{1/2}$, there is a $k$-permutation $\pi$ which avoids $J_{\ell}$. From Theorem \ref{mainlater} with $q=\ell^{-1/2}$ and $r=\ell^{1/2}/8$, we get the following corollary. Indeed, note that $N=2^r=2^{\Omega(k^{1/4})}$ and the mass of the matrix $M$ we get in Theorem \ref{mainlater} is at least $(1-q)^{(r+1)^2}N^2-1 > 2^{-3qr^2}N^2-1= N^{2-3qr}-1>N^{3/2}$. 

\begin{corollary}\label{cor9}
For each $k>2$ there is a permutation $\pi$ on $k$ elements and an $N \times N$ matrix $M$ with $N=2^{\Omega(k^{1/4})}$ such that the mass of $M$ is at least $N^{3/2}$ and $M$ avoids $\pi$. 
\end{corollary}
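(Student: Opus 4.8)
The plan is to derive Corollary~\ref{cor9} by substituting suitable parameters into Theorem~\ref{mainlater}, together with one structural observation: interval-minor containment is monotone under passing to a larger matrix, i.e.\ if $A$ contains $P'$ as a submatrix and $P'$ contains $Q$ as an interval minor, then $A$ contains $Q$ as an interval minor. First I would fix $\ell=\sqrt{k}$; when $k$ is not a perfect square, take $\ell=\lfloor\sqrt{k}\rfloor$ instead and let $\pi$ be the direct sum of the permutation produced below with an identity permutation on the remaining $k-\ell^2$ letters. By Lemma~\ref{lemexists} there is an $\ell^2$-permutation, and hence (after this padding, using the monotonicity observation with $A$ the matrix of $\pi$ and $P'$ the matrix of the $\ell^2$-permutation) a $k$-permutation $\pi$, whose matrix $P$ contains $J_\ell$ as an interval minor.

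Next I would apply Theorem~\ref{mainlater} with $q=\ell^{-1/2}$ and $r=\ell^{1/2}/8$. Its hypotheses hold once $k$ is above an absolute constant, since then $0<q<1/2$ and $3\le r\le q\ell/4=\ell^{1/2}/4$. This produces an $N\times N$ matrix $M$ with $N=2^{r}=2^{\Omega(k^{1/4})}$ that avoids $J_\ell$ as an interval minor and has mass at least $(1-q)^{(r+1)^2}N^2-1$. Using the elementary inequality $1-q\ge 2^{-2q}$ for $0\le q\le 1/2$ (the left side minus the right is concave in $q$ and vanishes at both endpoints), together with $2(r+1)^2\le 3r^2$ for $r$ above an absolute constant, this mass is at least $2^{-3qr^2}N^2-1=N^{2-3qr}-1$; since $qr=\tfrac18$ this equals $N^{13/8}-1>N^{3/2}$ for $N$, hence $k$, above an absolute constant. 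The finitely many remaining values of $k$ are handled directly, e.g.\ by taking $M$ a fixed one-entry matrix and choosing the constant hidden in $\Omega$ small enough.

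It then remains to show $M$ avoids $\pi$, i.e.\ avoids $P$ as a submatrix. Suppose not; then $P$ embeds into $M$ on some rows $R_1<\cdots<R_k$ and columns $C_1<\cdots<C_k$ with every one-entry of $P$ matched by a one-entry of $M$. Take the $\ell$ ordered disjoint row-intervals $A_1,\ldots,A_\ell$ and $\ell$ ordered disjoint column-intervals $B_1,\ldots,B_\ell$ of $P$ that witness $J_\ell$ as an interval minor of $P$, and for each $a$ let $\tilde A_a$ be the smallest interval of rows of $M$ containing $\{R_i:i\in A_a\}$, and similarly define $\tilde B_b$. Because the map $i\mapsto R_i$ is order-preserving, the $\tilde A_a$ are again disjoint and in order, and likewise the $\tilde B_b$; and each block $\tilde A_a\times\tilde B_b$ of $M$ contains a one-entry, since $A_a\times B_b$ did in $P$ and one-entries of $P$ become one-entries of $M$ under the embedding. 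Hence $M$ contains $J_\ell$ as an interval minor, contradicting Theorem~\ref{mainlater}.

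I expect the only step needing genuine thought to be the monotonicity of interval-minor containment, invoked both in the padding step and in the final contradiction; everything else is a direct plug-in of $\ell=\sqrt k$, $q=\ell^{-1/2}$, $r=\ell^{1/2}/8$ into Theorem~\ref{mainlater} and a one-line estimate of the resulting mass. Even that monotonicity is routine given the explicit description of interval minors via row- and column-interval partitions recalled in Section~\ref{sectlb}.
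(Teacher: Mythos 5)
Your proposal matches the paper's own deduction essentially verbatim: the same choice $\ell=k^{1/2}$ via Lemma~\ref{lemexists}, the same parameters $q=\ell^{-1/2}$ and $r=\ell^{1/2}/8$ in Theorem~\ref{mainlater}, and the same mass estimate $(1-q)^{(r+1)^2}N^2-1>N^{2-3qr}-1>N^{3/2}$ with $qr=1/8$. The only differences are details the paper leaves implicit --- the floor/padding for non-square $k$ and the explicit verification that a matrix containing $\pi$ whose matrix has $J_\ell$ as an interval minor must itself contain $J_\ell$ as an interval minor --- and you carry these out correctly.
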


As $\textrm{ex}(n,\pi)$ is super-additive, we  get $c(\pi) \geq \frac{\textrm{ex}(N,\pi)}{N} \geq N^{1/2}$ with $N=2^{\Omega(k^{1/4})}$. 

\begin{corollary}\label{cor10}
For each $k$, there is a $k$-permutation $\pi$ with $c(\pi)=2^{\Omega(k^{1/4})}$. 
\end{corollary}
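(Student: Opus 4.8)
The plan is to read this off directly from Corollary~\ref{cor9} together with the super-additivity of $\textrm{ex}(\cdot,\pi)$. For $k\le 2$ there is nothing to prove, since $2^{\Omega(k^{1/4})}$ is then just a constant and every $k$-permutation $\pi$ satisfies $c(\pi)\ge 1$. So fix $k>2$ and apply Corollary~\ref{cor9}: it produces a $k$-permutation $\pi$ together with an $N\times N$ matrix $M$, where $N=2^{\Omega(k^{1/4})}$, such that $M$ avoids $\pi$ and has mass at least $N^{3/2}$. By the very definition of the extremal function, this gives $\textrm{ex}(N,\pi)\ge N^{3/2}$.

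The second step is to promote this single-scale estimate to a bound on the F\"uredi--Hajnal limit. As recalled in the introduction, $\textrm{ex}(n,\pi)$ is super-additive: assuming by symmetry that the first letter of $\pi$ exceeds its last letter, the direct sum of two $\pi$-avoiding matrices again avoids $\pi$ (alternatively one may cite the Pach--Tardos super-additivity of $\textrm{ex}(n,P)$). Fekete's lemma then yields $c(\pi)=\lim_{n\to\infty}\textrm{ex}(n,\pi)/n=\sup_n \textrm{ex}(n,\pi)/n$, and in particular
\[
c(\pi)\ \ge\ \frac{\textrm{ex}(N,\pi)}{N}\ \ge\ \frac{N^{3/2}}{N}\ =\ N^{1/2}\ =\ 2^{\Omega(k^{1/4})},
\]
which is exactly the claimed bound.

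I do not expect any genuine obstacle: all of the difficulty is already packaged into Theorem~\ref{mainlater} and hence Corollary~\ref{cor9}, whose proof constructs the dense $\pi$-avoiding matrix via random dyadic rectangles. The only points needing a line of justification are that super-additivity (through Fekete's lemma) upgrades the single inequality $\textrm{ex}(N,\pi)\ge N^{3/2}$ into the stated asymptotic lower bound on $c(\pi)$, and the trivial exponent bookkeeping showing that losing the factor $N$ when passing from mass to the normalized extremal number still leaves $N^{1/2}=2^{\Omega(k^{1/4})}$.
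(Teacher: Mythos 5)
Your proposal is correct and matches the paper's own argument: the paper likewise deduces $\textrm{ex}(N,\pi)\geq N^{3/2}$ from Corollary \ref{cor9} and uses super-additivity of $\textrm{ex}(n,\pi)$ to conclude $c(\pi)\geq \textrm{ex}(N,\pi)/N\geq N^{1/2}=2^{\Omega(k^{1/4})}$. Your only addition is to make explicit the Fekete-lemma step $c(\pi)=\sup_n \textrm{ex}(n,\pi)/n$, which the paper leaves implicit.
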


By Cibulka's result that $c(\pi)=O(L(\pi)^{4.5})$, Theorem \ref{main1} follows. 

We next show a simpler deduction of the weaker estimate $L(\pi) = 2^{\Omega(k^{1/6})}$. From Lemma \ref{lemexists} with $\ell=k^{1/2}$ and Theorem \ref{mainlater} with $q=\ell^{-1/3}$ and $r=\ell^{1/3}/1000$, we obtain that there is a $k$-permutation $\pi$ and an $N \times N$ matrix $M$ with $N=2^{\Omega(k^{1/6})}$ and density at least $.99$ which avoids $\pi$. By repeatedly deleting a row or column with density less than $.9$ together with an arbitrary row or column so as to keep it a square matrix, we can find a $N' \times N'$ submatrix $M'$ of $M$ with $N' \geq .8N$ so that every row and column of $M'$ has density at least $.9$.  The number of zero entries deleted at step $i$ is more than $.1(N-i)$, and $M$ has at most $.01N^2$ zeros. If there are $s$ total steps, then the number of zeros deleted is least  $\sum_{i=0}^{s-1} .1(N-i)  = .1Ns-{s \choose 2}$, which is at most $.01N^2$, implying that $s$ is at most $.2N$. The resulting submatrix $M'$ is $N' \times N'$ with $N' \geq N-.2N=.8N$ and every row and column has density at least $.9$.  

The problem of counting permutation matrices contained in a matrix is equivalent to counting perfect matchings in the corresponding bipartite graph, with rows and columns as vertices, and a row is adjacent to a column if and only if their common entry is a one. One can arbitrarily start the permutation by picking the ones in the first $.3N'$ rows, giving at least $(.5N')^{.3N'}$ possible choices. By Hall's matching theorem, the partial permutation can be completed to a permutation, giving at least as many possible permutations. This gives $S_{N'}(\pi) \geq (.5N')^{.3N'}$.  The estimate $L(\pi) = 2^{\Omega(k^{1/6})}$ follows from the fact that $S_n(\pi)$ is super-multiplicative. 

For a family $U$ of permutations, let $\textrm{ex}(n,U)$ be the maximum mass of an $n \times n$ matrix which avoids every permutation in $U$. Let $c(U)=\lim_{n \to \infty} \frac{\textrm{ex}(n,U)}{n}$ if this limit exists. Notice that the fraction of $k$-permutations which are the concatenation of two permutations, where every letter in the first permutation is smaller than every letter in the second permutation, tends to $0$ as $k$ tends to infinity. Let $U$ be the family of all $k$-permutations $\pi$ which is not the concatenation of two permutations, where every letter in the first permutation is smaller than every letter in the second permutation, and $\pi$ contains $J_{\ell}$ as an interval minor, where $\ell=\left(\frac{k}{3\ln k}\right)^{1/2}$. By Lemma \ref{lemrandom} and the discussion above, almost all $k$-permutations are in $U$. Also, by the same arguments given in the introduction on super-multiplicitivity of $S_n(\pi)$ and the super-additivity of $\textrm{ex}(n,\pi)$, we have $S_n(U)$ is super-multiplicative and $L(U)$ exists and is finite, and $\textrm{ex}(n,U)$ is super-additive and $c(U)$ exists and is finite. By using Theorem \ref{mainlater} in the same way we deduced Corollary \ref{cor9}, we have the following corollary. 

\begin{corollary}
There is a family $U$ consisting of almost all $k$-permutations such that $c(U)$ exists and satisfies $c(U)=2^{\Omega\left((k/\log k)^{1/4}\right)}$. 
\end{corollary}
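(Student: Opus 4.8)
The plan is to rerun the argument behind Corollary~\ref{cor9}, but with the parameter $\ell=(k/3\ln k)^{1/2}$ forced on us by Lemma~\ref{lemrandom}, and to notice that a single matrix avoiding $J_\ell$ as an interval minor in fact avoids the entire family $U$ simultaneously. Every ingredient is already in place: Theorem~\ref{mainlater} supplies a dense matrix with no $J_\ell$ interval minor; Lemma~\ref{lemrandom} together with the fact (noted in the text) that only a vanishing fraction of $k$-permutations are concatenations of the excluded type shows that $U$ consists of almost all $k$-permutations; and the super-additivity of $\textrm{ex}(\cdot,U)$, proved exactly as in the introduction since no permutation in $U$ is such a concatenation, makes $c(U)=\lim_n\textrm{ex}(n,U)/n=\sup_n\textrm{ex}(n,U)/n$ a well-defined finite number.

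First I would isolate the elementary transitivity property that lets one matrix kill the whole family: if $A$ is a submatrix of $B$ and $Q$ is an interval minor of $A$, then $Q$ is an interval minor of $B$. This follows at once from the partition description of interval minors --- reinsert the rows and columns of $B$ absent from $A$ one at a time, absorbing each reinserted line into an adjacent interval while keeping the intervals disjoint and ordered. Granting this, let $M$ be the matrix produced by Theorem~\ref{mainlater}. If $M$ contained the permutation matrix of some $\pi\in U$, then since that permutation matrix contains $J_\ell$ as an interval minor (by the definition of $U$), $M$ would contain $J_\ell$ as an interval minor, contradicting Theorem~\ref{mainlater}. Hence $M$ avoids every $\pi\in U$, and therefore $\textrm{ex}(N,U)\ge\mathrm{mass}(M)$.

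Next I would fix the parameters just as for Corollary~\ref{cor9}: take $q=\ell^{-1/2}$, $r=\ell^{1/2}/8$, $N=2^r$, with $\ell=(k/3\ln k)^{1/2}$. For all sufficiently large $k$ one has $0<q<1/2$, $3\le r\le q\ell/4=\ell^{1/2}/4$ (so Theorem~\ref{mainlater} applies) and $3\ell^2\ln\ell<k$ (so Lemma~\ref{lemrandom} applies). Running the estimate from the deduction of Corollary~\ref{cor9}, $\mathrm{mass}(M)\ge(1-q)^{(r+1)^2}N^2-1>2^{-3qr^2}N^2-1=N^{2-3qr}-1>N^{3/2}$, using $qr=1/8$. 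Combined with $\textrm{ex}(N,U)\ge\mathrm{mass}(M)$, super-additivity gives $c(U)\ge\textrm{ex}(N,U)/N>N^{1/2}=2^{r/2}=2^{\ell^{1/2}/16}$, and $\ell^{1/2}=(k/3\ln k)^{1/4}=\Omega((k/\log k)^{1/4})$ yields $c(U)=2^{\Omega((k/\log k)^{1/4})}$.

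I expect no real obstacle here, only one point that deserves care: the transitivity property of the second paragraph, that interval-minor containment passes to any larger ambient matrix. This is conceptually trivial but is exactly what upgrades ``$M$ avoids the one permutation of Lemma~\ref{lemexists}'' (as in Corollary~\ref{cor9}) to ``$M$ avoids all of $U$'', so I would write the reinsertion of lines out carefully. The remaining work is routine bookkeeping: checking $3\le r\le q\ell/4$ and $k\ge3\ell^2\ln\ell$ for large $k$, and confirming that the mass bound of Theorem~\ref{mainlater} still exceeds $N^{3/2}$ under the choice $\ell=\Theta((k/\log k)^{1/2})$ used here in place of the $\ell=k^{1/2}$ of Corollary~\ref{cor9}.
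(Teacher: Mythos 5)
Your proposal is correct and follows essentially the same route as the paper, which defines $U$ exactly as you do (non-concatenable $k$-permutations containing $J_\ell$ as an interval minor with $\ell=(k/3\ln k)^{1/2}$, almost all by Lemma \ref{lemrandom}), invokes super-additivity of $\textrm{ex}(n,U)$ as in the introduction, and applies Theorem \ref{mainlater} precisely as in the deduction of Corollary \ref{cor9}. The only difference is that you spell out the (correct) transitivity step --- a matrix containing some $\pi\in U$ would contain $J_\ell$ as an interval minor --- and the parameter checks, both of which the paper leaves implicit.
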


Cibulka's argument for $c(\pi)=O(L(\pi)^{4.5})$ also implies $c(U)=O(L(U)^{4.5})$, and hence Theorem \ref{mainfam} follows from the above corollary. Again, we could get a weaker bound with a simpler argument as above using Hall's matching theorem. 
 
Using Lemma \ref{lemexists} with $\ell=k^{1/2}$ and applying Theorem \ref{mainlater} with $q=\ell^{-5/6}$ and $r=\ell^{1/6}/4$, we obtain the following maybe surprising corollary showing that there is a very dense matrix of size exponential in a power of $k$ which avoids some $k$-permutation. 

\begin{corollary}\label{cor7}
For each $k$ there is a $k$-permutation $\pi$ and an $N \times N$ matrix $M$ with $N=2^{\Omega(k^{1/12})}$ such that the density of $M$ is at least $1-k^{-1/4}$ and $M$ avoids $\pi$. 
\end{corollary}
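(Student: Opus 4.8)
The plan is to instantiate Theorem~\ref{mainlater} with a careful choice of parameters and then convert ``avoids $J_{\ell}$ as an interval minor'' into ``avoids a $k$-permutation'' via Lemma~\ref{lemexists}, exactly as in the deduction of Corollary~\ref{cor9}. Take $\ell=k^{1/2}$, $q=\ell^{-5/6}=k^{-5/12}$, $r=\ell^{1/6}/4=k^{1/12}/4$, and $N=2^{r}=2^{k^{1/12}/4}$. For $k$ below an absolute constant the statement is trivial (take $N=1$ with $M=J_{1}$, which avoids every $k$-permutation when $k\ge 2$ since it has only one one-entry, and $M$ the $1\times 1$ zero matrix when $k=1$, where $1-k^{-1/4}=0$), so I may assume $k$ is large; in particular $0<q<1/2$ and $r\ge 3$. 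The right-hand inequality in the hypothesis $3\le r\le q\ell/4$ of Theorem~\ref{mainlater} holds with equality, since $q\ell/4=\ell^{-5/6}\cdot\ell/4=\ell^{1/6}/4=r$.

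First I would apply Lemma~\ref{lemexists} with this $\ell$: as $\ell^{2}=k$, there is a $k$-permutation $\pi$ (namely $\pi(a\ell+b+1)=b\ell+a+1$) whose matrix contains $J_{\ell}$ as an interval minor. Next I would apply Theorem~\ref{mainlater} to obtain an $N\times N$ matrix $M$ of mass at least $(1-q)^{(r+1)^{2}}N^{2}-1$ that avoids $J_{\ell}$ as an interval minor. To see $M$ avoids $\pi$, suppose it contains $\pi$; then $M$ has a submatrix $D$ dominating the permutation matrix of $\pi$ entrywise, so $D$ contains $J_{\ell}$ as an interval minor, and enlarging each row (column) interval witnessing this to the smallest interval of $M$ containing it preserves disjointness and order, so $M$ itself contains $J_{\ell}$ as an interval minor — a contradiction. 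This is the same argument already used for Corollary~\ref{cor9}.

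It then remains to verify the two numerical assertions. For the density, Bernoulli's inequality gives $(1-q)^{(r+1)^{2}}\ge 1-q(r+1)^{2}$, and
\[
q(r+1)^{2}=\ell^{-5/6}\Bigl(\tfrac{\ell^{1/6}}{4}+1\Bigr)^{2}
=\tfrac{1}{16}\ell^{-1/2}+\tfrac12\ell^{-2/3}+\ell^{-5/6}
=\tfrac{1}{16}k^{-1/4}+\tfrac12 k^{-1/3}+k^{-5/12}.
\]
For $k$ large the last two terms are negligible compared with $k^{-1/4}$, and $1/N^{2}=2^{-k^{1/12}/2}$ is smaller still, so
\[
\frac{\textrm{mass}(M)}{N^{2}}\ \ge\ (1-q)^{(r+1)^{2}}-\frac{1}{N^{2}}\ \ge\ 1-k^{-1/4},
\]
as required. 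Finally $N=2^{k^{1/12}/4}=2^{\Omega(k^{1/12})}$, which is the claimed size.

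The computation is otherwise routine; the only point needing care is the density bookkeeping, namely checking that $q(r+1)^{2}$ (and not merely $qr^{2}$), together with the $1/N^{2}$ error, stays below $k^{-1/4}$. This is what pins down how large $k$ must be, but since the conclusion is an asymptotic $2^{\Omega(k^{1/12})}$ bound, restricting to $k$ above an absolute constant costs nothing.
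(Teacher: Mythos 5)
Your proposal is correct and follows exactly the paper's route: the paper deduces Corollary~\ref{cor7} by the same one-line instantiation of Lemma~\ref{lemexists} with $\ell=k^{1/2}$ and Theorem~\ref{mainlater} with $q=\ell^{-5/6}$, $r=\ell^{1/6}/4$, with the transfer from ``avoids $J_\ell$ as an interval minor'' to ``avoids $\pi$'' being the same argument used for Corollary~\ref{cor9}. Your explicit density bookkeeping via $(1-q)^{(r+1)^2}\ge 1-q(r+1)^2$ and the handling of small $k$ are just the routine details the paper leaves implicit.
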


\section{Reducing counting to extremal problems}
\label{latersect}

Let $T_{n}(\pi)$ be the number of  $n \times n$ matrices which avoid $\pi$.  Klazar \cite{K00} showed that $T_{n}(\pi)=2^{\Theta(\textrm{ex}(n,\pi))}$. We next show his short argument. 
If $M$ is a  matrix which avoids $\pi$, then all matrics which are contained in $M$ also avoid $\pi$. Hence,  $T_n(\pi) \geq 2^{\textrm{ex}(n,\pi)}$. In the other direction, we have 
\begin{equation}\label{Kl}T_{2n}(\pi) \leq T_n(\pi)15^{\textrm{ex}(n,\pi)},\end{equation} 
which implies by induction on $n$ that $T_n(\pi) \leq 15^{\textrm{ex}(n,\pi)}$.  
The proof goes as follows. Consider a $2n \times 2n$ matrix $A$ which avoids $\pi$. Partition the set of rows and the set of columns into consecutive sets of size two, and consider the $n \times n$ matrix $B$ obtained by contracting these pairs. As $B$ is a contraction of $A$, and $A$ avoids $\pi$, then $B$ also avoids $\pi$. Thus, the number of possible choices for $B$ is at most $T_n(\pi)$. For each of the at most $\textrm{ex}(n,\pi)$ one-entries in $B$, there are 15 possible  $2 \times 2$ matrices which contract to get a one-entry. We therefore obtain (\ref{Kl}). 

The trivial estimate $S_n(\pi) \leq T_n(\pi)$ was used by Klazar in the proof that the F\"uredi-Hajnal conjecture implies the Stanley-Wilf conjecture. It only gives the estimate  $L(\pi) \leq 2^{O(c(\pi))}$. The following lemma will be used to give a simple proof of Cibulka's \cite{C09} improved estimate $L(\pi)=O(c(\pi)^2)$.

\begin{lemma}\label{general}
For a permutation $\pi$ and positive integers $n$ and $t$, letting $N=tn$, we have $$S_N(\pi) \leq T_n(\pi)t^{2N}.$$
\end{lemma}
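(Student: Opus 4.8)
The plan is to decompose an arbitrary $N$-permutation $\sigma$ avoiding $\pi$ into a ``coarse'' $n\times n$ pattern, which must itself avoid $\pi$, together with a bounded amount of extra data recording how each coarse cell is refined. Write $N=tn$ and partition both the index set $[N]$ (positions) and the value set $[N]$ into $n$ consecutive blocks $R_1,\dots,R_n$ and $C_1,\dots,C_n$, each of size $t$. To $\sigma$ associate the $n\times n$ binary matrix $B$ whose $(i,j)$ entry is $1$ precisely when some point $(x,\sigma(x))$ of the permutation matrix of $\sigma$ lies in $R_i\times C_j$. Since $B$ is obtained from the permutation matrix of $\sigma$ by contracting consecutive rows and columns in blocks of $t$, and $\sigma$ avoids $\pi$, the matrix $B$ avoids $\pi$ as well (a contraction of a $\pi$-avoiding matrix still avoids $\pi$); hence there are at most $T_n(\pi)$ choices for $B$.

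Next I would bound the number of permutations $\sigma$ giving rise to a fixed $B$. Each nonzero cell $(i,j)$ of $B$ contains some subset of the points of $\sigma$, and within the $t\times t$ grid $R_i\times C_j$ such a configuration is described by choosing, for each of the $t$ rows in $R_i$, at most one of the $t$ columns in $C_j$ to be occupied (the unoccupied rows being recorded as ``empty''), which is at most $(t+1)^{t}$ possibilities per cell. The number of nonzero cells of $B$ is at most $N$, since each of the $N$ points of $\sigma$ lies in exactly one cell, so across all cells the refinement data is determined by at most $(t+1)^{tN}$ choices; being a little wasteful we can bound $(t+1)^{tN}\le (2t)^{tN}$ or simply fold the extra factor into a clean $t^{2N}$ bound. (One must check that $B$ together with this per-cell refinement data really does reconstruct $\sigma$: it does, because knowing which point of each row is occupied and in which column-block and at which offset pins down $\sigma(x)$ for every $x$.) Multiplying the count of coarse patterns by the count of refinements gives $S_N(\pi)\le T_n(\pi)\,t^{2N}$, as claimed.

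The only point requiring a little care — and the step I would expect to be the main obstacle — is getting the bookkeeping on the refinement data to land at exactly $t^{2N}$ rather than something like $(t+1)^{tN}\cdot(\text{something})$; the honest count per cell is closer to $t^{2t}$ (choices within a $t\times t$ block) and one needs to exploit that the \emph{total} number of points, summed over all cells, is only $N$, not $nt\cdot t$, to avoid an extra factor of $n$ in the exponent. Concretely, I would charge the refinement cost per point rather than per cell: each of the $N$ points needs at most $t$ bits to specify its row-offset within its block and at most $t$ bits for its column-offset, giving $t^{2N}$ directly, and the block-level information of which cell it lies in is exactly what $B$ already encodes. This reorganization sidesteps the wasteful per-cell accounting and yields the stated inequality cleanly.
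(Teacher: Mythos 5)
Your setup coincides with the paper's: contract the $N\times N$ permutation matrix of $\sigma$ into an $n\times n$ matrix $B$ along consecutive blocks of size $t$, note that $B$ still avoids $\pi$, and bound the number of choices for $B$ by $T_n(\pi)$. That part is fine. The gap is in the refinement count, which is exactly the step you flagged as the main obstacle. Your first accounting gives $(t+1)^{tN}$ per the per-cell bookkeeping, and this cannot be ``folded into'' $t^{2N}$: the exponent is $tN$ rather than $2N$, so $(t+1)^{tN}$ exceeds $t^{2N}$ for every $t\geq 2$. Your proposed fix then charges each of the $N$ points $t$ choices for its row-offset and $t$ for its column-offset, claiming that ``the block-level information of which cell it lies in is exactly what $B$ already encodes.'' That claim is false: $B$ records only which cells are occupied, not which occupied cell a given point lies in. If you enumerate the points row by row (each row of the permutation matrix has exactly one point), the row-offset you are paying for is free, since it is determined by the row itself; what is genuinely unknown, and not determined by $B$, is the column-block of that point's value --- a row in block-row $i$ could send its value into any column-block $j$ with $B_{ij}=1$. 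So the data you specify does not reconstruct $\sigma$, and the factor you omit is precisely the one doing the work.

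The missing observation, which is the heart of the paper's proof, is that $B$ is a contraction of a \emph{permutation} matrix: each block-row of $A$ contains exactly $t$ one-entries, so each row of $B$ has at most $t$ ones. Hence, for each of the $N$ rows of $A$, the column-block containing its unique one-entry can be specified by at most $t$ choices (which one-entry of the corresponding row of $B$ it lands in), and the exact column within that block by at most $t$ further choices, giving $t^2$ per row and $t^{2N}$ in total --- with the row-offset never charged for at all. With that replacement your argument becomes the paper's proof; as written, the reconstruction step does not go through.
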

\begin{proof}
Consider an $N \times N$ permutation matrix $A$ which avoids $\pi$. Partition the set of rows and the set of columns into consecutive sets of size $t$, and consider the $n \times n$ matrix $B$ obtained by contracting these intervals of size $t$. As $B$ is a contraction of $A$, and $A$ avoids $\pi$, then $B$ also avoids $\pi$. Thus, the number of possible choices for $B$ is at most $T_n(\pi)$. As $B$ came from the permutation matrix $A$ by contracting intervals of order $t$, each row of $B$ has at most $t$ one-entries.  After choosing $B$, the one-entry in a given row of $A$ must be in one of the blocks corresponding to a one-entry in $B$, giving at most $t^2$ choices for the location of the one in that row of $A$. Hence, the number of choices for $A$ which correspond to a given $B$ is at most $t^{2N}$. The desired upper bound on $S_N(\pi)$ follows.  
\end{proof}

Letting $t=c(\pi)$, and recalling $n=N/c(\pi)$, Lemma \ref{general} implies that $$S_N(\pi) \leq T_{n}(\pi)c(\pi)^{2N} \leq 2^{O(\textrm{ex}(n,\pi))}c(\pi)^{2N} \leq 2^{O\left(c(\pi)n\right)}c(\pi)^{2N}=\left(2^{O(1)}c(\pi)\right)^{2N}.$$ 
Taking the $N$th root of the above inequality,  we obtain $L(\pi)=O(c(\pi)^2)$.

\section{An improved upper bound} 
\label{sectnewthm}
For a matrix $P$, let $S_n(P)$ be the number of $n \times n$ permutation matrices which avoid $P$ as an interval minor. Let $m(n,P)$ be the maximum mass of an $n \times n$ matrix which avoids $P$ as an interval minor. Many of the results already discussed in this paper easily extend to give estimates on $S_n(P)$ and $m(n,P)$.  Note that, if $P$ is the permutation matrix of a permutation $\pi$, as containment of $P$ is equivalent to containment of $P$ as an interval minor, we have $S_n(P)=S_n(\pi)$ and $m(n,P)=m(n,\pi)$. 

We next provide a general framework extending that of Marcus and Tardos for proving upper bounds on $m(n,P)$. For a matrix $P$ and positive integers $s \leq t$, let $f_P(t,s)$ be the maximum $N$ such that there is an $N \times t$ matrix with at least $s$ ones in each row which avoids $P$ as an interval minor. If no such $N$ exists, we set $f_P(t,s)=\infty$. Similarly, let $g_P(t,s)$ be the minimum $N$ such any $t \times N$ matrix with at least $s$ ones in each column contains $P$ as an interval minor. If no such $N$ exists, we set $g_P(t,s)=\infty$. If $P$ is a symmetric matrix, then $f_P(t,s)=g_P(t,s)$. 

\begin{lemma} \label{verygen}
For positive integers $n,t,s$ with $s \leq t$ and a matrix $P$, we have the inequality $$m(tn,P) \leq m(s-1,P)m(n,P)+m(t,P)f_{P}(t,s)n+m(t,P)g_P(t,s)n.$$
\end{lemma}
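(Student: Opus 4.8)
The plan is to mimic the Marcus–Tardos block-partition argument, but in the language of interval minors. Partition the $tn \times tn$ matrix $A$ (which avoids $P$ as an interval minor) into $n^2$ blocks, each a $t \times t$ submatrix sitting at the intersection of $t$ consecutive rows and $t$ consecutive columns. Call a block \emph{wide} if it contains ones in at least $s$ distinct columns, \emph{tall} if it contains ones in at least $s$ distinct rows, and \emph{small} otherwise. The total mass of $A$ splits as a sum over the three types of blocks, and I will bound each contribution separately.

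\textbf{Bounding the small blocks.} A small block has ones in at most $s-1$ columns and at most $s-1$ rows, hence contributes at most $m(s-1,P)$ to the mass (the nonzero rows and columns form a submatrix avoiding $P$ as an interval minor, of size at most $(s-1)\times(s-1)$). So I need to bound the number of small blocks by $m(n,P)$. For this, form the $n \times n$ matrix $B$ whose $(i,j)$ entry is $1$ iff the $(i,j)$ block of $A$ is nonempty (contains at least one one-entry). If $B$ contained $P$ as an interval minor, then pulling each interval of rows/columns of $B$ back to the corresponding union of block-rows/block-columns in $A$ would exhibit $P$ as an interval minor of $A$ — contradiction. So $B$ avoids $P$ as an interval minor, hence has mass at most $m(n,P)$; in particular the number of nonempty blocks, and a fortiori the number of small blocks, is at most $m(n,P)$. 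This gives the term $m(s-1,P)m(n,P)$.

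\textbf{Bounding the wide and tall blocks.} Consider the wide blocks. In any fixed block-row (a horizontal strip of $t$ rows), look at which of the $n$ block-columns the wide blocks occupy. Contract, within this strip, each block-column down to a single column, recording a $1$ if the corresponding $t\times t$ block is wide: this yields a $t \times (\text{number of wide blocks in the strip})$ matrix in which each of the $t$ rows... wait — rather, the point is to build, for the $i$-th block-row, a $t \times N_i$ matrix (where $N_i$ is the number of wide blocks in that strip) by juxtaposing the $t$-row-tall wide blocks, which has at least $s$ ones in "each" — here one must be careful about orientation: each wide block has $\geq s$ ones spread over $\geq s$ columns. The correct bound: the wide blocks in a single block-row, viewed after contracting each to width making each column have a one, form a $t \times N_i$ matrix avoiding $P$ as an interval minor with $\geq s$ ones per column's worth of structure, forcing $N_i \le g_P(t,s)$; and each wide block contributes mass at most $m(t,P)$. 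Summing over the $n$ block-rows gives at most $m(t,P) g_P(t,s) n$ from wide blocks. By the transposed argument using $f_P(t,s)$ and summing over block-columns, the tall blocks contribute at most $m(t,P) f_P(t,s) n$. Adding the three bounds yields the claimed inequality.

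\textbf{The main obstacle} I anticipate is getting the orientation and the precise definitions of $f_P$ and $g_P$ to align with the wide/tall dichotomy — in particular making sure that a "wide" block (ones in many columns) is what feeds into $g_P$ (which concerns $t \times N$ matrices with many ones per column) after an appropriate transpose/contraction, and dually for "tall" and $f_P$; and verifying that concatenating many $t\times t$ blocks from one strip and contracting still produces a matrix that avoids $P$ as an interval minor (this is clear since any interval-minor structure in the concatenation lifts back into $A$) while genuinely having $\geq s$ ones per row (resp.\ column) so that the extremal function applies. The asymmetry of $P$ (when $P$ is not symmetric, $f_P \neq g_P$) is exactly why both terms appear, and keeping track of which is which is the delicate bookkeeping step.
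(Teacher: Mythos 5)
Your overall strategy is the paper's: partition the $tn \times tn$ matrix into $t \times t$ blocks, classify them as wide, tall, or small, bound the small-block contribution by $m(s-1,P)$ times the number of nonempty blocks, and control that number via the contracted $n \times n$ matrix $B$, which avoids $P$ as an interval minor and hence has mass at most $m(n,P)$. That part, giving the term $m(s-1,P)m(n,P)$, is correct and identical to the paper.

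The gap is precisely in the step you flagged: the orientation of the wide/tall bookkeeping is backwards, and the justification you offer does not hold. A wide block has ones in at least $s$ distinct \emph{columns}, so the contraction that extracts usable information is contracting its $t$ rows to a single row: that row then has at least $s$ ones. Accordingly, wide blocks must be grouped along a block-\emph{column} (a $tn \times t$ vertical strip): contracting the rows of each wide block and deleting the rows of the remaining blocks yields an $N \times t$ matrix with at least $s$ ones in each row that still avoids $P$ as an interval minor, whence $N \le f_P(t,s)$. Your version groups wide blocks along a block-row and contracts each wide block's columns to a single column, claiming ``$\geq s$ ones per column's worth of structure''; but the contracted column has as many ones as the block has nonzero rows, which can be $1$ (all $s$ ones of a wide block may sit in a single row), so the hypothesis of $g_P(t,s)$ is not satisfied and $N_i \le g_P(t,s)$ does not follow. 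Dually, it is the \emph{tall} blocks (ones in at least $s$ rows) that should be grouped along block-rows and contracted column-wise into a $t \times N$ matrix with at least $s$ ones per column, giving the $g_P(t,s)$ bound. Since the statement contains both terms $m(t,P)f_P(t,s)n$ and $m(t,P)g_P(t,s)n$ symmetrically, swapping your two pairings repairs the argument and recovers exactly the paper's proof; as written, however, the wide/tall step is not established.
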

\begin{proof}
Let $A$ be a $tn \times tn$ matrix which avoids $P$ as an interval minor. Partition the set of rows of $A$ into intervals of size $t$, and the set of columns of $A$ into intervals of size $t$, and contract these intervals to obtain an $n \times n$ matrix $B$. Since $B$ is a contraction of $A$, then $B$ also avoids $P$ as an interval minor. Call a $t \times t$ block, which is a product of one of the intervals of rows with one of the intervals of columns, {\it wide} if there are ones in at least $s$ of its columns, and {\it tall} if there are ones in at least $s$ of its rows. 

Each block of $A$ which is neither wide nor tall has ones in less than $s$ columns and in less than $s$ rows, and hence the submatrix of that block containing the rows and columns with at least one one-entry has at most $m(s-1,P)$ ones. As $B$ avoids $P$ as interval minor, $B$ has at most $m(n,P)$ ones, and hence the blocks of $A$ which are neither wide nor tall together have at most $m(s-1,P)m(n,P)$ ones. 

Each column of blocks of $A$ has at most $f_P(t,s)$ wide blocks. Indeed, contracting the rows of the wide blocks and deleting the rows of the blocks which are not wide, we obtain a $t \times N$ matrix which avoids $P$ as an interval minor, where $N$ is the number of wide blocks in that column, with at least $s$ ones in each row. Since this contraction also avoids $P$ as an interval minor, we have $N \leq f_P(t,s)$. Since there are $n$ columns of blocks, and each block has at most $m(t,P)$ ones in it, the total number of ones in wide blocks in $A$ is at most $m(t,P)f_P(t,s)n$. Similarly, the total number of ones in tall blocks in $A$ is at most $m(t,P)g_P(t,s)n$. Putting this all together, the mass of $A$ is at most $m(s-1,P)m(n,P)+m(t,P)f_{P}(t,s)n+m(t,P)g_P(t,s)n$, which completes the proof of the lemma. 
\end{proof}

Using the trivial inequalities $m(s-1,P) \leq (s-1)^2$ and $m(t,P) \leq t^2$,  the inequality in Lemma \ref{verygen} in the special case that $P=J_k$, $t=k^2$ and $s=k$ is $m(k^2n,J_k) \leq (k-1)^2 m(n,J_k)+2k^4f_{J_k}(k^2,k)n$. We have $f_{J_k}(k^2,k) \leq k{k^2 \choose k}$ from the pigeonhole principle, as any $k{k^2 \choose k}$ rows of length $k^2$ each with at least $k$ ones will contain $k$ rows with ones in exactly the same $k$ columns. This gives the inequality $$m(k^2n,J_k) \leq (k-1)^2 m(n,J_k)+2k^5{k^2 \choose k}n.$$ By induction on $n$, we obtain $m(n,J_k) \leq 2k^4{k^2 \choose k}n$. Noting that for a $k$-permutation $\pi$ we have $\textrm{ex}(n,\pi) \leq m(n,J_k)$, we obtain the Marcus-Tardos inequality (\ref{MTineq}) with the same proof. 

We next show how to improve this estimate. 

\begin{theorem}\label{mainab1}
We have $m(n,J_k) \leq 3k2^{8k}n$. 
\end{theorem}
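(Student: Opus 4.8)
The plan is to feed Lemma~\ref{verygen} back into itself with $P = J_k$. Since $J_k$ is symmetric, $f_{J_k}(t,s) = g_{J_k}(t,s)$, so Lemma~\ref{verygen} reads
\[
m(tn, J_k)\ \le\ m(s-1, J_k)\,m(n, J_k)\ +\ 2\,m(t, J_k)\,f_{J_k}(t,s)\,n .
\]
To convert this into a linear bound $m(n,J_k)\le c_k n$ by induction on $n$, the coefficient $m(s-1,J_k)$ in front of $m(n,J_k)$ must be a definite fraction of $t$. I would force this as strongly as possible by taking $s=k$, so that $m(s-1,J_k)=m(k-1,J_k)=(k-1)^2$ \emph{exactly} (a $(k-1)\times(k-1)$ matrix has too few rows to have $J_k$ as an interval minor), and $t=2(k-1)^2$, so that $t-m(s-1,J_k)=(k-1)^2$. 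Using the trivial bound $m(t,J_k)\le t^2=4(k-1)^4$, the recursion becomes
\[
m\big(2(k-1)^2 n,\,J_k\big)\ \le\ (k-1)^2\,m(n,J_k)\ +\ 8(k-1)^4\,f_{J_k}\big(2(k-1)^2,k\big)\,n ,
\]
and a straightforward induction on $n$ (base case $n\le 2(k-1)^2$ covered by $m(n,J_k)\le n^2$) yields $m(n,J_k)\le c_k n$ as soon as $c_k\ge 8(k-1)^2\,f_{J_k}\big(2(k-1)^2,k\big)$. Hence everything reduces to the single estimate $f_{J_k}\big(2(k-1)^2,\,k\big)=2^{O(k)}$, after which taking $c_k=3k\,2^{8k}$ — the exponent $8$ and the factor $3k$ arranged so as to absorb the implied constant together with the $8(k-1)^2$ — closes the argument and gives $m(n,J_k)\le 3k\,2^{8k}n$.

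The heart of the proof is therefore this bound: an $N\times 2(k-1)^2$ matrix with at least $k$ ones in every row that avoids $J_k$ as an interval minor satisfies $N=2^{O(k)}$. The natural approach I would take is to cut the $2(k-1)^2$ columns into $k$ consecutive blocks $C_1,\dots,C_k$ (each of width $\Theta(k)$) and classify each row by the set of blocks it meets. If $k$ distinct rows all meet all $k$ blocks, then using those rows as the row intervals and $C_1,\dots,C_k$ as the column intervals exhibits $J_k$ as an interval minor; so at most $O(k)$ rows can meet every block. There are only $2^k$ possible "meeting sets", so it remains to bound, for each meeting set $T$ with $|T|=j\le k-1$, the number of rows realizing it. Such rows are supported inside the union of $j\le k-1$ of the blocks, i.e.\ inside a strictly narrower band of columns, with each row still carrying at least $k$ ones, and one recurses the same cutting argument inside that band.

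The hard part will be organizing this recursion on the band width so that the total multiplicative blow-up over all levels stays $2^{O(k)}$ rather than $2^{O(k\log k)}$: a crude implementation loses a factor of roughly $2^{\Theta(k)}$ at each of $\Theta(\log k)$ levels, which only recovers the bound $c(\pi),L(\pi)=2^{O(k\log k)}$ already implied by Cibulka's estimate. Making the recursion efficient is exactly where one must exploit the interval-minor structure carefully — that rows meeting the same blocks may be merged at no cost, and that the recursion terminates quickly once a band has width $\Theta(k)$ (where the forced $k$ ones make the rows essentially constant, so only $O(k)$ of them can avoid $J_k$) or width below $k$ (where no row with $k$ ones can fit at all). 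Once $f_{J_k}\big(2(k-1)^2,k\big)=2^{O(k)}$ is in hand, it plugs directly into the displayed recursion to give the claimed bound $m(n,J_k)\le 3k\,2^{8k}n$.
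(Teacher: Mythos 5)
Your reduction step is fine and is in fact the same framework the paper uses (Lemma \ref{verygen} plus induction on $n$), but the proposal has a genuine gap exactly at what you yourself call the heart of the argument: the claim $f_{J_k}\bigl(2(k-1)^2,k\bigr)=2^{O(k)}$ is never proved, and your sketched block-cutting recursion does not deliver it. As you note, the crude version of that recursion loses an exponential factor per level; worse, since a meeting set can have size $k-1$, the band width only shrinks by a factor of about $(k-1)/k$ per level, so the depth is $\Theta(k\log k)$ rather than $\Theta(\log k)$, and nothing in the sketch improves on the pigeonhole bound $f_{J_k}\bigl(2(k-1)^2,k\bigr)\le (k-1)\binom{2(k-1)^2}{k}=2^{O(k\log k)}$ that is already implicit in Marcus--Tardos. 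At the parameter scale $s=k$, $t=\Theta(k^2)$ it is not even clear that the $2^{O(k)}$ bound you need is true; it is certainly not something one gets ``for free'' from the interval-minor structure.

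The paper closes this gap by choosing the parameters in Lemma \ref{verygen} completely differently: it takes $s=2^{k-1}$ and $t=2^{2k}$, i.e.\ it demands \emph{exponentially} many ones per row rather than the natural $k$. The payoff is the halving recursion $f_{r,k}(t,s)\le 2f_{r,k}(t/2,s)+2f_{r,k-1}(t/2,s/2)$: splitting the columns in half either keeps all $s$ ones on one side or donates a one to one side (reducing $k$ by one) while at least $s/2$ ones remain on the other. This recursion can only be iterated $k-1$ times if $s$ survives being halved $k-1$ times, which is precisely why $s\ge 2^{k-1}$ is imposed; it then yields $f_{r,k}(t,s)\le r2^{k-1}t^2/s$ (Lemma \ref{lablab}), which at $t=2^{2k}$, $s=2^{k-1}$ is only $k2^{4k}$, while $m(s-1,J_k)\le(s-1)^2\approx t/4$ keeps the induction on $n$ contracting. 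That shift of scale --- proving a weak, easily-recursed bound on $f$ at exponential $s$ and $t$ instead of a strong bound at $s=k$, $t=\Theta(k^2)$ --- is the missing idea; without it (or a genuine proof of your claimed estimate), the proposed argument does not establish $m(n,J_k)\le 3k2^{8k}n$.
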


If $\pi$ is a $k$-permutation, as $\textrm{ex}(n,\pi) \leq m(n,J_k)$, from Theorem \ref{mainab1}, we have $c(\pi) \leq 3k2^{8k}$. As Cibulka \cite{C09} obtained $L(\pi)=O(c(\pi)^2)$ (a short proof was given in the previous section), we obtain $L(\pi)=O(k^2 2^{16k})$. Hence, Theorem \ref{newthm} follows from Theorem \ref{mainab1}.
 
To obtain Theorem \ref{mainab1}, it will be helpful to consider $J_{r,k}$, the all ones $r \times k$ matrix. Let $f_{r,k}(t,s)=f_{J_{r,k}}(t,s)$. We have the following inequality. 

\begin{lemma}
If $s \leq t$ are positive integers with $t$ even, then $$f_{r,k}(t,s) \leq 2f_{r,k}(t/2,s)+2f_{r,k-1}(t/2,s/2).$$
\end{lemma}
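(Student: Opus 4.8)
The plan is to prove the recursion by taking an extremal matrix realizing $f_{r,k}(t,s)$ and splitting its columns in half. Concretely, let $N = f_{r,k}(t,s)$ and let $A$ be an $N \times t$ matrix, each row having at least $s$ ones, that avoids $J_{r,k}$ as an interval minor. Write the $t$ columns of $A$ as the concatenation of a left block $L$ of $t/2$ columns and a right block $R$ of $t/2$ columns. Each row of $A$ has at least $s$ ones total, so for each row, either $L$ contains at least $s/2$ of its ones, or $R$ contains at least $s/2$ of its ones (or both). Call a row \emph{left-heavy} if its restriction to $L$ has at least $s/2$ ones, and \emph{right-heavy} otherwise (so a right-heavy row has at least $s/2$ ones in $R$); every row is one or the other.

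Next I would group the rows of $A$ into maximal runs of consecutive left-heavy rows and maximal runs of consecutive right-heavy rows. Within any such run, the corresponding submatrix (using only the $L$-columns for a left-heavy run, only the $R$-columns for a right-heavy run) is a matrix with $t/2$ columns, at least $s/2$ ones in each of its rows, and it avoids $J_{r,k-1}$ as an interval minor --- otherwise, since a left-heavy run also has at least... hmm, actually this needs care. The cleaner route: group rows into maximal runs of the same type, but also bound the number of runs. I would argue that a run of left-heavy rows of height more than $f_{r,k-1}(t/2,s/2)$ would contain $J_{r,k-1}$ as an interval minor inside its $L$-columns; combining that with... no. Let me restructure: the right way is to show (a) there are at most $2f_{r,k-1}(t/2,s/2)$ left-heavy rows in total, because the submatrix of $A$ consisting of all left-heavy rows restricted to $L$ has $t/2$ columns and $\geq s/2$ ones per row, and if it had more than $f_{r,k-1}(t/2,s/2)$ rows it would contain $J_{r,k-1}$ as an interval minor; then together with the all-ones pattern coming from any single column-interval on the $R$-side... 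This still isn't quite matching the stated bound $2f_{r,k}(t/2,s) + 2f_{r,k-1}(t/2,s/2)$, which has a $k$ and a $k-1$ term, suggesting the split is: consecutive runs alternate between a ``$J_{r,k}$-on-a-half'' obstruction and a ``$J_{r,k-1}$-on-a-half'' obstruction.

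So the correct decomposition I would use: scan the rows top to bottom and greedily carve off maximal initial segments. A left-heavy maximal segment can have at most $f_{r,k}(t/2,s) $ rows before its $L$-restriction contains $J_{r,k}$ as an interval minor --- wait, left-heavy rows only have $s/2$ ones. The resolution is that we should define heaviness by a full $s$ threshold on one side only when possible, else split differently. Given the statement's exact form, the intended argument is: call a row \emph{$L$-full} if it has $\geq s$ ones in $L$ and \emph{$R$-full} if it has $\geq s$ ones in $R$; a row that is neither has between $1$ and $s-1$ ones in each half but $\geq s/2$ in at least one half. Partition rows into maximal blocks alternating in a controlled way; the $L$-full (resp. $R$-full) blocks contribute at most $f_{r,k}(t/2,s)$ rows each, and by avoidance of $J_{r,k}$ as an interval minor there can be at most one such block of each side-type once we also use the other half (an $L$-full block of $f_{r,k}(t/2,s)+1$ rows would give $J_{r,k}$ directly); the remaining rows, each having $\geq s/2$ ones in some fixed half, when grouped by which half, form matrices with $\geq s/2$ ones per row in $t/2$ columns avoiding $J_{r,k-1}$ as an interval minor (the ``$-1$'' coming from the one column-interval already spent on the full side), bounding them by $f_{r,k-1}(t/2,s/2)$ each, with again at most two such groups. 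Summing: $2f_{r,k}(t/2,s) + 2f_{r,k-1}(t/2,s/2)$.

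The main obstacle I anticipate is getting the bookkeeping of the row-partition exactly right so that the four terms appear with the stated coefficients and no more: in particular, justifying that the $L$-full (and $R$-full) portions are each limited to a \emph{single} block of size $\le f_{r,k}(t/2,s)$ rather than many such blocks, and that the ``residual'' rows split into at most two groups each avoiding $J_{r,k-1}$ as an interval minor on a half. This requires a careful argument that once a certain column-interval structure appears on one half, any further occurrence on the other half completes a full $J_{r,k}$; I would handle it by fixing, for a hypothetical long $L$-full run, the $k$ column-intervals in $L$ witnessing $J_{r,k-1}$ (or $J_{r,k}$), then showing any subsequent long enough run on either side extends this to $J_{r,k}$ in the full matrix $A$, contradicting avoidance. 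The rest --- verifying the row counts and the per-row one-counts after halving --- is routine.
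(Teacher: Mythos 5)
Your overall strategy is the paper's: split the $t$ columns into two halves, classify rows by how their ones are distributed between the halves, bound two classes by $f_{r,k}(t/2,s)$ and two by $f_{r,k-1}(t/2,s/2)$, the latter by using the opposite half (contracted to a single column interval, which contains a one in every relevant row) to upgrade a $J_{r,k-1}$ interval minor on one half to a $J_{r,k}$ interval minor of the whole matrix. But as assembled, your argument has a genuine gap, and it is exactly the one you flag yourself: the run/block bookkeeping. You bound each maximal block of $L$-full (resp.\ $R$-full) rows by $f_{r,k}(t/2,s)$ and then assert ``there can be at most one such block of each side-type,'' which is neither justified nor true as stated (the rows could alternate types, producing many maximal blocks of size $1$), and without it your sum does not come out to the claimed four terms.

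The missing observation that dissolves the problem is that interval-minor avoidance is inherited by \emph{arbitrary} submatrices: if you delete rows and columns and the remaining matrix contains $P$ as an interval minor, you can extend each interval of the submatrix to the interval of the original matrix spanning from its smallest to its largest index, and these witness $P$ in the original matrix. Hence no consecutiveness is needed at all: take \emph{all} rows of a given class at once. Concretely (this is the paper's partition, slightly cleaner than your $L$-full/$R$-full one, though yours also works once blocks are abandoned): rows whose left half is all zero have $\geq s$ ones in the right half, so their right-half restriction avoids $J_{r,k}$ as an interval minor and there are at most $f_{r,k}(t/2,s)$ of them; symmetrically for rows whose right half is all zero. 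Every remaining row has a one in each half and at least $s/2$ ones in one of the halves; those with $\geq s/2$ ones in the right half number at most $f_{r,k-1}(t/2,s/2)$, since a $J_{r,k-1}$ interval minor in their right-half restriction, together with the entire left half as a preceding column interval (which meets every one of these rows), gives $J_{r,k}$ in $A$; symmetrically on the left. Summing the four bounds gives $N \leq 2f_{r,k}(t/2,s)+2f_{r,k-1}(t/2,s/2)$, with none of the ``single block'' issues you were trying to patch.
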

\begin{proof}
Suppose we have an $N \times t$ matrix with at least $s$ ones in each row which avoids $J_{r,k}$ as an interval minor. Partition the $t$ columns into two intervals of $t/2$ columns. The number of rows where the first $t/2$ entries are all zero is at most $f_{r,k}(t/2,s)$, and the number of rows where the last $t/2$ entries are all zero is at most $f_{r,k}(t/2,s)$. The remaining rows have at least one one-entry in the first $t/2$ entries and at least one one-entry in the last $t/2$ entries. Of these rows, there are at most $f_{r,k-1}(t/2,s/2)$ rows that have at least $s/2$ one-entries in the last $t/2$ entries. Indeed, this can be seen by contracting the first $t/2$ columns, so the resulting submatrix has a one in the first entry of each row, which can be used to make one column of a $J_k$ interval minor. The remaining rows have at least $s/2$ one-entries in the first $t/2$ entries, and by the same argument, there are at most $f_{r,k-1}(t/2,s/2)$ such rows. Altogether, we get $N \leq 2f_{r,k}(t/2,s)+2f_{r,k-1}(t/2,s/2)$, which completes the proof. 
\end{proof}

We have the following lemma. 

\begin{lemma} \label{lablab}
For positive integers $s$, $t$ and $k$ with $t$ a power of $2$ and $2^{k-1} \leq s \leq t$, we have $$f_{r,k}(t,s) \leq r2^{k-1}t^2/s.$$
\end{lemma}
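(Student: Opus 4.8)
The plan is to prove the bound by a double induction: an outer induction on $k$, and, inside the step, a secondary induction on $t$ ranging over powers of $2$, feeding the recursion $f_{r,k}(t,s)\le 2f_{r,k}(t/2,s)+2f_{r,k-1}(t/2,s/2)$ of the preceding lemma into both inductive hypotheses. The reason both inductions are needed is that the recursion expresses $f_{r,k}(t,s)$ in terms of a term at the same $k$ but smaller $t$ (handled by the secondary induction) and a term at $k-1$ (handled by the outer one).

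For the base case $k=1$ I would note that $J_{r,1}$ is a single column of $r$ ones, so any $N\times t$ matrix with at least one $1$ in every row contains $J_{r,1}$ as an interval minor as soon as $N\ge r$: take $r$ singleton intervals of rows and the one interval consisting of all $t$ columns. Hence $f_{r,1}(t,s)\le r-1$, and since $s\le t$ gives $t^2/s\ge 1$, this is at most $r\cdot 2^{0}\cdot t^2/s$, as required. For the inductive step, fix $k\ge 2$, assume the bound at $k-1$, and run the secondary induction on $t$. Since $2^{k-1}\le s\le t$ forces $t\ge 2^{k-1}$ and $t$ is a power of $2$, $t$ is even, so the preceding lemma applies. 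In the second term $s/2$ lies between $2^{k-2}$ and $t/2$, so the outer hypothesis at $k-1$ bounds $f_{r,k-1}(t/2,s/2)$ by $r2^{k-2}(t/2)^2/(s/2)=r2^{k-3}t^2/s$. In the first term there are two cases: if $s>t/2$ then $f_{r,k}(t/2,s)=0$, since a row of length $t/2$ cannot carry $s$ ones; if $s\le t/2$ then $2^{k-1}\le s\le t/2$ with $t/2$ still a power of $2$, so the secondary hypothesis gives $f_{r,k}(t/2,s)\le r2^{k-1}(t/2)^2/s=r2^{k-3}t^2/s$. Either way the first term is at most $r2^{k-3}t^2/s$, and one checks $2\cdot r2^{k-3}t^2/s+2\cdot r2^{k-3}t^2/s=r2^{k-1}t^2/s$, closing the induction; the secondary induction is well founded because repeatedly halving $t$ with $s$ fixed eventually reaches the non-recursive branch $s>t/2$.

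The step needing the most (still modest) care is the case analysis on the first term — the vanishing of $f_{r,k}(\cdot,s)$ once $s$ exceeds the number of columns, and verifying that the two contributions sum to exactly $r2^{k-1}t^2/s$ rather than something larger. There is also a cosmetic parity point: if one wants to be scrupulous one should read $\lceil s/2\rceil$ for $s/2$ in the recursion, which is harmless since $\lceil s/2\rceil\ge s/2$ only improves the estimate and $\lceil s/2\rceil$ still lies in $[2^{k-2},t/2]$. I do not expect any genuine obstacle: the whole content is in the recursion of the preceding lemma, and this lemma merely iterates it while tracking constants.
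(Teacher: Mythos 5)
Your proposal is correct and follows essentially the same route as the paper: the base case $k=1$ followed by a double induction on $k$ and $t$ that feeds the recursion $f_{r,k}(t,s)\le 2f_{r,k}(t/2,s)+2f_{r,k-1}(t/2,s/2)$ into both inductive hypotheses and checks the constants sum to $r2^{k-1}t^2/s$. You are in fact a bit more careful than the paper, which silently assumes $s\le t/2$ for the first term and omits the $\lceil s/2\rceil$ issue; your observation that $f_{r,k}(t/2,s)$ vanishes when $s>t/2$ cleanly plugs that small gap.
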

\begin{proof}
The proof is by induction on $k$ and $t$. In the base case $k=1$, we have $f_{r,1}(t,s)= r \leq r2^{k-1}t^2/s$, which follows from contracting the columns. Now suppose we know the lemma for all smaller choices of $k$ or for when $t' < t$. 
Then  $$f_{k,r}(t,s) \leq  2f_{r,k}(t/2,s)+2f_{r,k-1}(t/2,s/2) \leq 2r2^{k-1}(t/2)^2/s+2r2^{k-2}(t/2)^2/(s/2)=r2^{k-1}t^2/s.$$
This completes the proof by induction. \end{proof}

From Lemma \ref{verygen} with $P=J_k$, $s=2^{k-1}$ and $t=2^{2k}$ and using the trivial inequalities $m(s-1,P) \leq s^2$ and $m(t,P) \leq t^2$, noting that $P$ is symmetric, and using Lemma \ref{lablab} with $r=k$ we obtain 
$$ m(2^{2k}n,J_k) \leq s^2 m(n,J_k) +2t^2f_{k,k}(t,s)n \leq 2^{2k-2}m(n,J_k)+2k2^{8k}n.$$
Iterating this inequality, we obtain $$m(n,J_k) \leq 2k2^{6k}n(1+\frac{1}{4}+\frac{1}{4^2}\cdots)+m(2^{2k},J_k) \leq \frac{4}{3} 2k2^{8k}n+2^{4k} \leq 3k2^{8k},$$
which completes the proof of Theorem \ref{mainab1}.

\section{Concluding Remarks} 
\label{sectconc}

\vspace{0.1cm}
\noindent {\bf Permutations with large Stanley-Wilf limits}
\vspace{0.2cm}

 The following question of B\'ona seems quite interesting. 

\begin{question} \cite{B13+}\label{questBon}
What makes a $k$-permutation easier to avoid than another $k$-permutation? 
\end{question}

It was conjectured \cite{B05} that for each $k$ there is a layered $k$-permutation which is the easiest to avoid amongst the $k$-permutations, i.e., the Stanley-Wilf limit $L(\pi)$ is maximized amongst all $k$-permutations by a layered permutation. As discussed in the introduction, this conjecture is false. In fact, our results suggest in a certain sense that the opposite is true. Note that layered permutations are characterized by avoiding the small permutations $231$ and $312$.

A partial answer to Question \ref{questBon} appears to be that a $k$-permutation is easier to avoid if it contains all $t$-permutations with $t$ large. Indeed, Theorem \ref{mainlater} implies that if a $k$-permutation $\pi$ contains all $t$-permutations with $t=\omega((\log k)^{4})$, then $L(\pi)$ is super-polynomial in $k$. On the other hand, the following conjecture seems plausible. 

\begin{conjecture}
Fix $t$. If $\pi$ is a $k$-permutation which avoids some $t$-permutation, then $L(\pi)=k^{O(1)}$. 
\end{conjecture}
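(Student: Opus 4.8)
\medskip\noindent\textbf{Proof proposal.}
By the reduction $L(\pi)=O(c(\pi)^2)$ proved in Section~\ref{latersect}, it is enough to bound the F\"uredi--Hajnal limit, and since $c(\pi)=\lim_{n}\textrm{ex}(n,\pi)/n=\lim_{n}m(n,P)/n$ where $P$ is the permutation matrix of $\pi$, it suffices to show $m(n,P)\le k^{O_t(1)}n$. The only consequence of the hypothesis I plan to use is structural: since $J_t$ contains $\tau$ as a submatrix, if $P$ contained $J_t$ as an interval minor then $P$ would contain $\tau$ as an interval minor, i.e.\ $\pi$ would contain $\tau$; hence \emph{$P$ avoids $J_t$ as an interval minor}. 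So I propose to prove the following statement, which implies the conjecture (and is, in spirit, equivalent to it): if a $k\times k$ permutation matrix $P$ avoids $J_t$ as an interval minor, then $m(n,P)=k^{O_t(1)}n$.

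The engine is the Marcus--Tardos-style recursion of Lemma~\ref{verygen}, applied to the matrix $P$ itself rather than to $J_k$. With block size $T$ and threshold $s$, and using the trivial bounds $m(s-1,P)\le s^2$ and $m(T,P)\le T^2$, it gives
$$m(Tn,P)\ \le\ s^2\,m(n,P)\ +\ T^2\big(f_P(T,s)+g_P(T,s)\big)n .$$
Taking $T=10s^2$ and writing $\mu(n)=m(n,P)/n$, this normalizes to $\mu(Tn)\le \tfrac{1}{10}\mu(n)+T\big(f_P(T,s)+g_P(T,s)\big)$, a recursion with contraction factor $\tfrac1{10}<1$; it telescopes, together with super-additivity of $m(\cdot,P)$, to $c(\pi)=O\big(s^2(f_P(T,s)+g_P(T,s))\big)$. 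Thus everything reduces to a polynomial (in $k$, for $t$ fixed) bound on $f_P(T,s)$ and $g_P(T,s)$ for some polynomial $s=s(k)$ and $T=10s^2$; and since $P^{\top}$ is again a permutation matrix avoiding $J_t$ as an interval minor and $g_P(T,s)=f_{P^{\top}}(T,s)$, it is enough to bound $f_P(T,s)$ over the whole class of such $P$.

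To bound $f_P(T,s)$, take an $N\times T$ matrix $Q$ with at least $s$ ones in every row that avoids $P$ as an interval minor, and contract the $T$ columns into $m=O(sk)$ consecutive blocks; the resulting $N\times m$ matrix $R$ still avoids $P$ as an interval minor, and (since each row of $Q$ has $\ge s$ ones spread over blocks of size $s/k$) every row of $R$ has at least $k$ ones. Because $R$ is a contraction of $Q$, a copy of $\pi$ in $R$ \emph{as a submatrix} lifts to a copy of $\pi$ in $Q$ as an interval minor; hence it suffices to prove the following embedding lemma, by induction on the fixed parameter $t$: \emph{if a matrix with at least $k$ ones per row avoids a $k$-permutation $\pi$ whose matrix avoids $J_t$ as an interval minor, then it has at most $k^{O_t(1)}$ rows.} The base case $t=2$ is a direct analysis: permutation matrices avoiding $J_2$ as an interval minor are highly structured (essentially built from monotone blocks of bounded multiplicity), so the bound can be checked by hand. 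For the inductive step, one performs a column split of $R$ together with a \emph{compatible} partition of the rows of $P$ into a bounded (in $t$) number of row bands, each avoiding $J_{t-1}$ as an interval minor, and embeds the bands one at a time using the inductive hypothesis (with the extra ones guaranteed per row used to fuel the matching of successive bands).

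The main obstacle is exactly this embedding lemma, and at its heart the structural dichotomy it requires: for an ordered $0/1$ matrix, either $J_t$ occurs as an interval minor, or the matrix splits into $t-1$ row bands none of which has $J_{t-1}$ as an interval minor — and, crucially, a version of this robust enough that the bands can be located inside \emph{any} sufficiently row-dense $\pi$-avoiding matrix, uniformly over all such $\pi$. This is a Ramsey-type statement for interval minors of ordered matrices that does not follow from the Marcus--Tardos machinery as developed here, and proving it is the crux; it is the precise quantitative form of the heuristic in the concluding remarks that a permutation becomes hard to avoid exactly when it contains all patterns of size about $\sqrt{k}$ in a structured (interval-minor) way.
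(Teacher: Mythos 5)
First, a point of orientation: this statement is one of the paper's \emph{conjectures}, stated in the concluding remarks; the paper offers no proof of it, so there is no argument of the paper to compare yours against, and your proposal does not close it either. The outer shell of your program is sound and is consistent with how the paper reasons about such questions: from the hypothesis that $\pi$ avoids some $t$-permutation you correctly deduce that the permutation matrix $P$ avoids $J_t$ as an interval minor (pattern containment and interval-minor containment coincide for permutation matrices, as noted at the start of Section~\ref{sectnewthm}); Lemma~\ref{verygen} with block size $T=10s^2$, the trivial bounds $m(s-1,P)\le s^2$ and $m(T,P)\le T^2$, super-additivity, and the bound $L(\pi)=O(c(\pi)^2)$ from Section~\ref{latersect} do reduce the conjecture to showing $f_P(T,s),g_P(T,s)=k^{O_t(1)}$ for some polynomial choice of $s$. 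All of that is a legitimate Marcus--Tardos-style scheme, and your contraction of the $T$ columns into $O(sk)$ blocks so that every row of the contracted matrix $R$ has at least $k$ ones is also correct.

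The genuine gap is exactly where you place it, but it is worse than a missing technical step: the ``embedding lemma'' that carries all the content is not only unproven, it is false as you have stated it, because you dropped any hypothesis on the number of columns. Take $t=2$ and $\pi$ the identity $k$-permutation, whose matrix avoids $J_2$ as an interval minor (a one in the block $I_1\times L_2$ and a one in the block $I_2\times L_1$ would force indices $a<b$ and $b<a$ simultaneously). Let $A$ be the $N\times Nk$ matrix whose $i$th row has ones exactly in columns $(N-i)k+1,\dots,(N-i)k+k$. Every row has $k$ ones, yet $A$ avoids even the pattern $12$ (a later row has all its ones strictly to the left of all ones of any earlier row), hence avoids $\pi$, while $N$ is unbounded. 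In your intended application $R$ has only $O(sk)$ columns, so the lemma you actually need must carry that hypothesis explicitly; but even in that corrected form, the inductive step you sketch --- locating $t-1$ row bands of $P$, each avoiding $J_{t-1}$ as an interval minor, and embedding them one at a time uniformly over all $\pi$-avoiding row-dense matrices --- is precisely the structural/Ramsey-type ingredient that does not follow from Lemma~\ref{verygen}, Lemma~\ref{lablab}, or anything else in the paper, and the base case $t=2$ is asserted rather than carried out. So what you have is a plausible reduction of the conjecture to a concrete extremal statement about $f_P(T,s)$ for $J_t$-interval-minor-free permutation matrices, which is a reasonable way to attack the problem, but the conjecture itself remains open.
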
 

\vspace{0.1cm}
\noindent {\bf Interval minors}
\vspace{0.2cm}

We have seen the usefulness of interval minors for studying extremal and counting problems for permuations. We think a further study of interval minors could be a fruitful direction for research. In particular, it would be interesting to obtain better estimates for $S_n(P)$ and $m(n,P)$. 

Another direction which could be quite rewarding:  What can be said about the structure of matrices which avoid a given matrix $P$ as an interval minor? In particular, it is interesting to investigate whether an analogue of the graph minor theory developed by Robertson and Seymour (see, e.g., \cite{RS}) could be established for interval minors. 

In this direction, Guillemot and Marx \cite{GM14} have introduced a new type of decomposition, and use this to give a linear-time algorithm for the permutation containment for a fixed permutation. Specifically, they show that determining whether an $n$-permutation contains a given $k$-permutation can be done in time $2^{O(k^2 \log k)}n$. Their proof relies on the Marcus-Tardos result \cite{MT}. As discussed by Guillemot and Marx, any improvement would give a faster algorithm for permutation containment. Our improved bound, Theorem \ref{mainab1}, can also easily be made into a linear time algorithm for finding a $J_k$ interval minor in a sufficiently dense matrix. It therefore implies the improved running time of $2^{O(k^2)}n$ for determining whether an $n$-permutation contains a given $k$-permutation. Our lower bound also provides a limitation to this method. 

\vspace{0.3cm}
\noindent {\bf Ramsey vs. extremal problems}
\vspace{0.2cm}

In this paper, we studied extremal and counting problems for permutation avoidance. Another natural question is to look at Ramsey problems for permutation avoidance. For a matrix $P$, define the minor Ramsey number $r(P)$ to be the minimum $n$ such that if the ones in $J_n$ are colored red and blue, then the red or the blue matrix contains $P$ as an interval minor. It is not difficult to show (see \cite{CFLS}) that for a $k \times k$-matrix $P$, $r(P) \leq k^2$. The Ramsey problem is quite different from the extremal problem, as we saw in  Corollary \ref{cor7} that we can make the red matrix almost complete (of density $1-k^{-1/4}$) and of exponential in a power of $k$ size such that it avoids some $k$-permutation matrix as an interval minor. 

\vspace{0.5cm}

\noindent {\bf Acknowledgement:} I would like to thank Amol Aggarwal, Noga Alon, Mikl\'os B\'ona, Sergi Elizalde, V\'it Jel\'inek, Joel Spencer, Richard Stanley, Einar Steingr\'imsson, Gabor Tardos, and Yufei Zhao for many helpful comments.

\end{document}